\date{\today}
\newcommand{\tr}{{\mathrm{Tr}\,}}
\newcommand{\calD}{{\mathcal{D}}}
\newcommand{\calS}{{\mathcal{S}}}
\newcommand{\bbA}{{\mathbb{A}}}
\newcommand{\bbC}{{\mathbb{C}}}
\newcommand{\bbH}{{\mathbb{H}}}
\newcommand{\bbM}{{\mathbb{M}}}
\newcommand{\bbN}{{\mathbb{N}}}
\newcommand{\bbP}{{\mathbb{P}}}
\newcommand{\bbR}{{\mathbb{R}}}
\newcommand{\bbT}{{\mathbb{T}}}
\newcommand{\bbZ}{{\mathbb{Z}}}
\newcommand{\frA}{{\mathfrak{A}}}
\newcommand{\frF}{{\mathfrak{F}}}
\newcommand{\frT}{{\mathfrak{T}}}
\newcommand{\rms}{{\mathrm{s}}}
\newcommand{\rmu}{{\mathrm{u}}}
\newcommand{\scrJ}{{\mathscr{J}}}
\newcommand{\DS}{{\calD\calS}}
\newcommand{\unstSect}{\mathsf{E}^\rmu}
\newcommand{\stabSect}{\mathsf{E}^\rms}
\newtheorem{theorem}{Theorem}[section]
\newtheorem{prop}[theorem]{Proposition}
\newtheorem{coro}[theorem]{Corollary}
\theoremstyle{definition}
\newtheorem{definition}[theorem]{Definition}
\newtheorem{remark}[theorem]{Remark}
\theoremstyle{plain}
\numberwithin{equation}{section}
\DeclareMathOperator{\supp}{supp}
\DeclareMathOperator{\sgn}{sgn}
\DeclareMathOperator{\Arg}{Arg}
\newcommand{\set}[1]{\left\{#1\right\}}
\definecolor{purple}{rgb}{.5,0,1}
\begin{document}

\author[D. Damanik]{David Damanik}
\address{Department of Mathematics, Rice University, Houston, TX~77005, USA}
\email{damanik@rice.edu}

\author[J. Fillman]{Jake Fillman}
\address{Department of Mathematics, Texas State University, San Marcos, TX 78666, USA}
\email{fillman@txstate.edu}

\author[Z. Zhang]{Zhenghe Zhang}
\address{Department of Mathematics, University of California, Riverside, CA 92521, USA}
\email{zhenghe@ucr.edu}

\title[Gap Labelling for Ergodic Jacobi Matrices]{Johnson--Schwartzman Gap Labelling for \\ Ergodic Jacobi Matrices}

\maketitle

\vspace{-1cm}

\begin{abstract}
We consider two-sided Jacobi matrices whose coefficients are obtained by continuous sampling along the orbits of a homeomorphim of a compact metric space. Given an ergodic probability measure, we study the topological structure of the associated almost sure spectrum. We establish a gap labelling theorem in the spirit of Johnson and Schwartzman. That is, we show that the constant value the integrated density of states takes in a gap of the spectrum must belong to the countable Schwartzman group of the base dynamics. This result is a natural companion to a recent result of Alkorn and Zhang, which established a Johnson-type theorem for the families of Jacobi matrices in question.
\\
\\
\textbf{Keywords:}\\ Jacobi matrices, gap labelling, density of states, topological dynamics, oscillation theory \\
\\
\textbf{Mathematics Subject Classification:}\\  
47B36  $\cdot$ 37D05 $\cdot$ 47A10.
\end{abstract}

%\setcounter{tocdepth}{1}
%\tableofcontents

%changing the hyperlink-colors such that they are are black in the table of contents and thereafter dark blue
\hypersetup{
	linkcolor={black!30!blue},
	citecolor={red},
	urlcolor={black!30!blue}
}

\section{Introduction}

In this paper we are interested in the spectra of Jacobi matrices with dynamically defined coefficients, including the singular case where the off-diagonal entries are allowed to vanish. It is well known that the topological structure of these spectra can range from connected sets (i.e., sets without any interior gaps) to sets given by a finite union of non-degenerate compact intervals all the way to Cantor-type sets (i.e., sets with empty interior). A very useful tool that has been deployed in the Schr\"odinger case, which corresponds to the case where the off-diagonal elements of the Jacobi matrix in question are identically equal to one, is \emph{gap labelling theory}. Using the well-known fact that the set of growth points of the integrated density of states (IDS) coincides with the almost sure spectrum, and hence on each gap of the spectrum, the IDS takes a constant value that can be used to \emph{label} this gap in a unique fashion, gap labelling theory provides useful ways to interpret these values of the IDS in gaps, which are in fact heavily restricted by the base dynamics. Two common interpretations are based on $K$-theory \cite{Bel1986, Bel1990, Bel1992b, Bel2003, BelBovGhe1992} and the Schwartzman homomorphism \cite{Johnson1986JDE}, respectively. In either case, it follows that there is an at most countable set of values, which is completely determined by the base dynamics, such that for all continuous sampling functions used to generate the coefficients of the operator by sampling along orbits, the values of the IDS in gaps of the spectrum must belong to the countable set in question.

Both approaches to gap labelling theory have their advantages and disadvantages. The $K$-theory approach has a much broader scope, but explicit computations of the label set can be difficult. On the other hand, the approach based on the Schwartzman homomorphism has a more limited scope, but the computations of labels appear to be easier in this setting. We refer the reader to the recent survey \cite{DFGap} for background and more information. We also want to emphasize that the ability to carry out explicit computations in several cases of interest has led to various new results about spectra of dynamically defined Schr\"odinger operators; compare, for example, \cite{DFdm, DFGap}. In particular, examples have been found where the label set is as small as it can possibly be (equal to the integers), which has the immediate consequence that the spectrum has no interior gaps for any continuous sampling function.

The approach to gap labelling based on the Schwartzman homomorphism was developed in the Schr\"odinger case by Johnson \cite{Johnson1986JDE}. It uses as a critical input that there are continuous invariant sections for the associated energy-dependent cocycle for any energy in the complement of the spectrum. This input is one particular aspect of what is now called \emph{Johnson's theorem}, which characterizes the complement of the spectrum as the set of energies where an \emph{exponential dichotomy} holds. Johnson's theorem is proved in the Schr\"odinger case in the paper \cite{Johnson1986JDE} as well. However, a corresponding result in the Jacobi matrix case was not available until recently.

Alkorn and Zhang showed in \cite{AlkornZhang} that there is a version of Johnson's theorem for Jacobi matrices. Their work establishes such a result both in the case of  a fixed Jacobi matrix and in the case of a dynamically defined family of Jacobi matrices. In the latter setting, Marx had an earlier result under more restrictive assumptions \cite{Marx2014Nonlin}, whereas the Alkorn--Zhang result holds in complete generality. As \cite{AlkornZhang} and \cite{Marx2014Nonlin} have shown, the suitable replacement of the concept of exponential dichotomy in the case of Jacobi matrices is the concept of a \emph{dominated splitting}, which can be used to characterize the complement of the spectrum; see Subsection~\ref{ss.AZ} for more details. 

As Johnson's theorem and the Johnson--Schwartzman gap labelling are really companion results in the Schr\"odinger case, it is a natural goal to work out Johnson-Schwartzman gap labelling for Jacobi matrices, now that the Alkorn--Zhang extension of Johnson's theorem is available for Jacobi matrices. This is precisely what the present paper seeks to accomplish. 

We will state the desired result in Theorem~\ref{t.main} below. Let us first describe the setting in which it holds and the key quantities it involves.

A (whole-line) Jacobi matrix is an operator of the form
\begin{equation}
\label{eq:jacobidef}
[Ju](n) = \overline{a(n-1)} u(n-1) + b(n) u(n) + a(n) u(n+1),
\quad u\in \bbC^\bbZ, \ n \in \bbZ,\end{equation}
where $a(n) \in \bbC$ and $b(n) \in \bbR$ for $n \in \bbZ$. A half-line Jacobi matrix  on $\bbZ_+ = \{0,1,2,\ldots\}$ is given by \eqref{eq:jacobidef} for $n \in \bbZ_+$ with the boundary condition $u(-1)=0$. For $N \in \bbN$, an $N \times N$ Jacobi matrix acts on $\ell^2(\{0,1,\ldots,N-1\}) \cong \bbC^N$ via \eqref{eq:jacobidef} for $0 \leq n \leq N-1$ with boundary conditions $u(-1)=u(N)=0$.

 In the whole- and half-line cases, we will assume $a=\{a(n)\}$ and $b=\{b(n)\}$ are bounded sequences so that \eqref{eq:jacobidef} defines a bounded self-adjoint operator in $\ell^2(\bbZ)$ in the whole-line case and $\ell^2(\bbZ_+)$ in the half-line case. On one hand, this is a common assumption. On the other hand, it is forced by the particular setting in which we work, namely, that in which the sequences $a$ and $b$ are generated by continuously sampling points in a compact metric space.

Let us introduce this setting more precisely. Let $(\Omega,T)$ be a topological dynamical system, that is, $\Omega$ is a compact metric space and $T : \Omega \to \Omega$ is a homeomorphism. Given $q \in C(\Omega,\bbR)$, $p \in C(\Omega,\bbC)$, and $\omega \in \Omega$, we consider the family of Jacobi matrices $\{J_\omega\}_{\omega \in \Omega}$, acting in $\ell^2(\bbZ)$, given by 
\begin{equation} \label{eq:abomegadef}
a_\omega(n) = p(T^n\omega), \quad b_\omega(n) = q(T^n\omega), \quad n \in \bbZ,
\end{equation}
 that is,
\begin{equation}\label{e.jacmat}
[J_\omega\psi](n) = \overline{p(T^{n-1}\omega)}\psi(n-1)+q(T^n\omega)\psi(n) + p(T^n\omega)\psi(n+1).
\end{equation}
We will call a family $\{J_\omega\}$ as defined above a \emph{topological family of Jacobi matrices}.

While most of the paper works under these assumptions, we will also briefly discuss the case where $T$ is merely continuous, but not invertible. In this case one can define half-line Jacobi matrices, acting in $\ell^2(\bbZ_+)$, in the following way:
\begin{equation}\label{e.jacmathl}
[J_\omega\psi](n) = \begin{cases} \overline{p(T^{n-1}\omega)}\psi(n-1)+q(T^n\omega)\psi(n) + p(T^n\omega)\psi(n+1) & n \ge 1 \\ q(\omega)\psi(0) + p(\omega)\psi(1) & n = 0. \end{cases}
\end{equation}
We will explicitly say when we drop the assumption that $T$ is a homeomorphism.

Fix a $T$-ergodic Borel probability measure $\mu$ on $\Omega$. We assume 
\begin{equation} \label{eq:fullsupp}
\supp\mu = \Omega,
\end{equation} 
where $\supp \mu$ denotes the topological support of $\mu$, that is, the smallest closed set having full $\mu$-measure. Let us mention that assumption \eqref{eq:fullsupp} is non-restrictive, since one can always replace the dynamical system $(\Omega,T)$ by $(\supp\mu,T|_{\supp\mu})$. The \emph{density of states measure} (DOSM) is given by
\begin{equation}\label{e.DOSM}
\int f \, d\kappa 
= \lim_{N \to \infty} \int f\, d\kappa_{\omega,N}
:= \lim_{N\to\infty} \frac{1}{N} \tr(f(J_\omega\chi_{_{[0,N)}})), \quad \mu\text{-a.e.\ } \omega \in \Omega,
\end{equation}
and the \emph{integrated density of states} (IDS) is then given by
\begin{equation}\label{e.IDS}
k(E) = \int \! \chi_{_{(-\infty,E]}} \, d\kappa.
\end{equation}
The following statements are well known and not hard to check:
\begin{enumerate}
\item There is a fixed compact set $\Sigma = \Sigma_{\mu,p,q} \subseteq \bbR$ such that $\Sigma = \sigma(J_\omega)$ for $\mu$-a.e.\ $\omega \in \Omega$.
\item $\mu$-a.e.\ $\omega$ has a dense orbit in $\Omega$. One has $\Sigma = \sigma(J_\omega)$ for any $\omega$ with a dense orbit.
\item $\supp(\kappa) = \Sigma$.
\item $k$ is constant on each connected component of $\bbR \setminus \Sigma$.
\end{enumerate}
In view of the items above, it is natural to associate to each gap of $\Sigma$ the constant value assumed by $k$ on the gap; this value is called the \emph{label} of the gap. Gap-labelling theory attempts to characterize valid labels of gaps in terms of the topology and dynamics of the underlying system.

Our main result is the Johnson--Schwartzman gap-labelling theorem for ergodic Jacobi matrices.

\begin{theorem}[Gap Labelling for Ergodic Jacobi Matrices]\label{t.main}
Let $\{J_\omega\}_{\omega \in \Omega}$ be a topological family of Jacobi matrices defined over the ergodic topological dynamical system $(\Omega,T,\mu)$ with $\supp \mu = \Omega$. For all $E \in \bbR \setminus \Sigma$, $k(E)$ belongs to $\frA(\Omega,T,\mu) \cap [0,1]$, where $\frA(\Omega,T,\mu)$ denotes the Schwartzman group of $(\Omega,T,\mu)$.
\end{theorem}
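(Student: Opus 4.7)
The plan is to adapt Johnson's original Schr\"odinger argument~\cite{Johnson1986JDE} to the Jacobi setting, using the Alkorn--Zhang dominated splitting theorem from Subsection~\ref{ss.AZ} in place of exponential dichotomy. Fix $E \in \bbR \setminus \Sigma$. The Alkorn--Zhang theorem furnishes a continuous $T$-invariant unstable section $\unstSect \colon \Omega \to \mathbb{RP}^1$ for the energy-$E$ transfer cocycle. Composing with the canonical homeomorphism $\mathbb{RP}^1 \cong S^1$ yields a continuous map $\phi_E \in C(\Omega, S^1)$.

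From here the argument splits in two. On the Schwartzman side, the $S^1$-valued cocycle $(n, \omega) \mapsto \phi_E(T^n\omega)\overline{\phi_E(\omega)}$ admits a continuous $\bbR$-valued lift $\widetilde{F}(n, \omega)$, and Birkhoff's theorem gives that
\[
\alpha(E) \defeq \lim_{n \to \infty} \frac{\widetilde{F}(n, \omega)}{n}
\]
exists and is $\omega$-independent for $\mu$-a.e.\ $\omega$; by the very definition of $\frA(\Omega, T, \mu)$, we have $\alpha(E) \in \frA(\Omega, T, \mu)$. On the oscillation-theoretic side, I would need the Johnson--Moser-type identity $k(E) \equiv \alpha(E) \pmod \bbZ$, with representatives chosen in $[0,1]$. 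This should follow from a Sturm-type count for the finite Jacobi blocks $J_\omega^{[0, N)}$: the number of eigenvalues in $(-\infty, E]$ equals, up to $o(N)$ boundary corrections, the net angular advance of a Pr\"ufer-type variable tracking $\unstSect$ along the orbit segment $\omega, T\omega, \dots, T^{N-1}\omega$. Dividing by $N$, the left-hand side converges to $k(E)$ by~\eqref{e.DOSM}--\eqref{e.IDS}, and the right-hand side converges to $\alpha(E)$.

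The main obstacle is the singular case where $p$ may vanish. Three points need care: (i)~the dominated splitting from Alkorn--Zhang must be shown to extend continuously across the singular fibres $\{p = 0\}$, so that $\unstSect$, and hence $\alpha(E)$, is genuinely well defined; (ii)~the usual Pr\"ufer variables degenerate at zeros of $p$, so the Sturm count must be recast intrinsically, for instance by decomposing $J_\omega^{[0, N)}$ into a direct sum of smaller regular Jacobi blocks at the singular sites and aggregating their eigenvalue counts; (iii)~the cumulative defect contributed by visits of the orbit to the singular locus must be $o(N)$, which should follow from the ergodic theorem together with continuity of the projective cocycle action. Once (i)--(iii) are in hand, the remaining steps are standard and mirror the Schr\"odinger template, yielding $k(E) \in \frA(\Omega, T, \mu) \cap [0,1]$.
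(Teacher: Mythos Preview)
Your overall architecture matches the paper's: use Alkorn--Zhang to get the unstable section, feed it into the Schwartzman machine, and match the resulting rotation number to $1-k(E)$ via oscillation theory. Two points, however, are genuine gaps rather than routine details.

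First, you assert that $\unstSect$ takes values in $\bbR\bbP^1$, but for complex $p$ the cocycle $B^E$ has complex entries and the sections live only in $\bbC\bbP^1\cong S^2$, which kills the identification with $S^1$ and hence the whole Schwartzman computation. The paper fixes this with a preliminary reduction (Proposition~\ref{p.takingabsval}): the finite truncations $J_{p,q,\omega}^{(N)}$ and $J_{|p|,q,\omega}^{(N)}$ are unitarily equivalent, so $k_{p,q}=k_{|p|,q}$ and $\Sigma_{p,q}=\Sigma_{|p|,q}$, and one may assume $p\ge 0$ from the outset. Without this step your map $\phi_E$ is not defined.

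Second, and more seriously, your item~(iii) is false as stated. If $\mu(\{p=0\})>0$, then by ergodicity the orbit visits the singular locus with \emph{positive density}, so the number of singular sites in $[0,N)$ is $\Theta(N)$, not $o(N)$. Any $O(1)$ defect per block therefore survives the thermodynamic limit and destroys the identity $k(E)\equiv\alpha(E)\pmod\bbZ$. The paper handles this case not by bounding the defect but by showing it is \emph{exactly zero}: since $p(T^{n_r}\omega)=0$ forces $B^E(T^{n_r}\omega)$ to map everything to the span of $\vec e_1$, one has $\unstSect(T^{n_r+1}\omega)=\mathrm{span}(\vec e_1)$, so the unstable section \emph{is} (projectively) the Dirichlet solution on each block. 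The interpolated argument change over a block then equals the sign-flip count on the nose, with no error term at all (equations~\eqref{eq:singCase:argmultpi}--\eqref{eq:singGoalstep}). This exact coincidence is the key idea in the singular case, and your proposal does not contain it; the ``continuity of the projective cocycle action'' you invoke cannot produce it, because the projective action is genuinely undefined at singular fibers except on $\unstSect$.
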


\begin{remark}
In the event that $p(\omega)>0$ for all $\omega \in \Omega$, one can prove Theorem~\ref{t.main} in a very similar fashion to Johnson's arguments in \cite{Johnson1986JDE}. Let us emphasize that Theorem~\ref{t.main} contains no assumption on $p$ at all. In particular, $p$ need not be uniformly positive, log-integrable, or even nonzero a.e.\ with respest to the ergodic measure.
\end{remark}

For the reader's convenience, let us briefly recall the construction of the Schwartzman group; we direct the reader to the original papers \cite{Johnson1986JDE, Schwarzmann1957Annals} and the survey \cite{DFGap} for additional details. Let $(\Omega,T,\mu)$ be an ergodic topological dynamical system. The suspension of $(\Omega,T,\mu)$, denoted $(X,\tau,\nu)$, is given by
\begin{align*}
X& = \Omega \times [0,1] / ((\omega,1) \sim (T\omega,0)) \\
\int_X f\, d\nu & = \int_0^1 \int_\Omega f([\omega,t]) \, d\mu(\omega) \, dt,
\end{align*}
and $\tau$ denotes the translation flow in the second factor of $X$. Let $C^\sharp(X,\bbT)$ denote the set of equivalence classes in $C(X,\bbT)$ modulo homotopy. The Schwartzman homomorphism $\frF_\nu:C^\sharp(X,\bbT) \to \bbR$ is defined by
\begin{equation}
\frF_\nu([\phi]) = \lim_{t\to \infty} \frac{\widetilde\phi_x(t)}{t}, \quad \nu\text{ a.e.\ } x,
\end{equation}
where $\widetilde\phi_x$ is any lift of $\phi_x:t \mapsto \phi(\tau^tx)$ to a map $\bbR \to \bbR$. The Schwartzman group is then the range of this homomorphism:
\begin{equation}
\frA(\Omega,T,\mu) = \frF_\nu(C^\sharp(X,\bbT)).
\end{equation}

For later use, we also note that it is sometimes helpful to identify the circle $\bbT$ with the real projective line $\bbR\bbP^1$ and to use maps into $\bbR\bbP^1$ to characterize the range of the Schwartzman homomorphism. Concretely, identifying $\theta \in \bbT$ with $\mathrm{span}\{(\cos\pi\theta,\sin\pi\theta)^\top\}$ in $\bbR\bbP^1$, if $\phi \in C(X,\bbR\bbP^1)$, then one has
\begin{equation} \label{eq:schwartzmanArgument}
\frF_\nu([\phi]) = \lim_{t\to\infty} \frac{1}{\pi t} \Delta_{\Arg}^{[0,t]}\phi_x, \quad \nu\text{ a.e.\ } x,
\end{equation}
where $\Delta_{\Arg}^{[a,b]}\phi_x$ denotes the net change in the argument of $\phi_x$ on the interval $[a,b]$ and $\phi_x(t) = \phi(\tau^tx)$ as before. Naturally, one has
\[\Delta_{\Arg}^{[0,t]}\phi_x = \widetilde\phi_x(t) - \widetilde\phi_x(0)\]
where $\widetilde\phi_x$ is any lift of $\phi_x$.

For ease of reference, let us make some applications of Theorem~\ref{t.main} to specific choices of base dynamics explicit. The three examples we discuss are of wide interest and the spectra of the associated Schr\"odinger operators and some Jacobi matrices have been studied extensively; compare, for example, \cite{Damanik2017ESOSurvey, MarxJito2017ETDS} and references therein. The computation of the relevant Schwartzman group for each of them is covered by the following general result from \cite{DFGap}. Consider $T_{A,b} : \bbT^d \to \bbT^d, \; \omega \mapsto A \omega + b$, where $A \in \mathrm{SL}(d,\bbZ)$, $b \in \bbT^d$. Suppose $\mu$ is $T_{A,b}$-ergodic with $\supp(\mu) = \bbT^d$. Then, according to \cite[Theorem~8.1]{DFGap} we have that
\begin{equation}\label{e.DFatm}
\frA(\bbT^d,T_{A,b},\mu) = \{ mb + n : n \in \bbZ \text{ and } m \in  \bbZ^d \cap \ker(I-A^*) \}.
\end{equation}
Based on this result in combination with Theorem~\ref{t.main} we obtain the following corollaries. 

\begin{coro}[Gap Labelling for Quasi-Periodic Jacobi Matrices]
Let $\{J_\omega\}_{\omega \in \Omega}$ be a topological family of Jacobi matrices defined over a minimal translation on a finite-dimensional torus: $\Omega = \bbT^d$, $d \in \bbN$, $T = T_\alpha : \bbT^d \to \bbT^d$, $\omega \mapsto \omega + \alpha$, where $\alpha \in \bbT^d$ has rationally independent entries, $\mu = \mathrm{Leb}$. Then, for all $E \in \bbR \setminus \Sigma$, $k(E)$ belongs to $\bbZ^d \alpha + \bbZ$.
\end{coro}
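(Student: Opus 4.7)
The plan is to derive the corollary as an immediate application of Theorem~\ref{t.main} together with the Schwartzman group computation \eqref{e.DFatm}; the only real work is to verify that the quasi-periodic setting fits cleanly into both results.

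First, I would check the hypotheses of Theorem~\ref{t.main}. Since $\alpha\in\bbT^d$ has rationally independent entries, a classical argument via Weyl equidistribution shows that $T_\alpha$ is minimal and uniquely ergodic, with Lebesgue measure as its unique invariant Borel probability measure. In particular, $\mu=\mathrm{Leb}$ is $T_\alpha$-ergodic and satisfies $\supp\mu=\bbT^d$, so Theorem~\ref{t.main} applies and yields
\[
k(E)\in\frA(\bbT^d,T_\alpha,\mathrm{Leb})\cap[0,1]
\quad\text{for every } E\in\bbR\setminus\Sigma.
\]

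Second, I would identify $T_\alpha$ with the toral affine map $T_{A,b}$ of \eqref{e.DFatm} by taking $A=I_d$ and $b=\alpha$. Then $I-A^{*}=0$, so $\bbZ^d\cap\ker(I-A^{*})=\bbZ^d$, and \eqref{e.DFatm} gives
\[
\frA(\bbT^d,T_\alpha,\mathrm{Leb})
=\{m\alpha+n:m\in\bbZ^d,\ n\in\bbZ\}
=\bbZ^d\alpha+\bbZ.
\]
Combining the two displays, $k(E)\in(\bbZ^d\alpha+\bbZ)\cap[0,1]\subseteq\bbZ^d\alpha+\bbZ$, which is the stated conclusion.

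I do not expect any genuine obstacle: since \eqref{e.DFatm} and Theorem~\ref{t.main} are already in place, the argument is essentially bookkeeping, and the only ingredient added by hand is the unique ergodicity of an irrational translation on the torus. If one wished to avoid even invoking \eqref{e.DFatm} as a black box, the Schwartzman group could be recomputed directly using the description \eqref{eq:schwartzmanArgument}, by noting that every continuous map $\bbT^d\times[0,1]/{\sim}\to\bbT$ is homotopic to a character of the form $[\omega,t]\mapsto m\cdot\omega+nt$ with $m\in\bbZ^d$ and $n\in\bbZ$, whose winding number along a $\tau$-orbit averages to $m\alpha+n$; but this is not necessary given the results already quoted.
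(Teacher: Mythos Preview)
Your proposal is correct and follows essentially the same approach as the paper: set $A=I$, $b=\alpha$ so that $T_{A,b}=T_\alpha$, observe $\bbZ^d\cap\ker(I-A^*)=\bbZ^d$, and combine Theorem~\ref{t.main} with \eqref{e.DFatm}. You supply a bit more detail (verifying ergodicity and full support, and sketching an alternative direct computation of the Schwartzman group), but the argument is the same.
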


\begin{proof}
Setting $A = I$, the identity matrix, and $b = \alpha$, we observe that $T_{A,b} = T_\alpha$, and hence the statement follows from Theorem~\ref{t.main} and \eqref{e.DFatm} since $\bbZ^d \cap \ker(I-A^*) = \bbZ^d$.
\end{proof}

\begin{coro}[Gap Labelling for Jacobi Matrices Generated by the Skew Shift]
Let $\{J_\omega\}_{\omega \in \Omega}$ be a topological family of Jacobi matrices defined by the standard skew shift: $\Omega = \bbT^2$, $T = T_{\mathrm{ss}} : \bbT^2 \to \bbT^2, \; (\omega_1, \omega_2)^\top \mapsto (\omega_1 + \alpha, \omega_1 + \omega_2)^\top$, where $\alpha \in \bbT$ is irrational, $\mu = \mathrm{Leb}$. Then, for all $E \in \bbR \setminus \Sigma$, $k(E)$ belongs to $\bbZ \alpha + \bbZ$.
\end{coro}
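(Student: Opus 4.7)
The plan is to recognize the skew shift $T_{\mathrm{ss}}$ as an affine transformation of the form $T_{A,b}$ on $\bbT^2$, after which the corollary follows by the same strategy used for the quasi-periodic case: apply \eqref{e.DFatm} to compute the Schwartzman group, then invoke Theorem~\ref{t.main}. Writing $(\omega_1,\omega_2)^\top \mapsto A(\omega_1,\omega_2)^\top + b$, the correct choice is
\[
A = \begin{pmatrix} 1 & 0 \\ 1 & 1 \end{pmatrix} \in \mathrm{SL}(2,\bbZ), \qquad b = (\alpha, 0)^\top \in \bbT^2,
\]
which indeed gives $T_{A,b} = T_{\mathrm{ss}}$.

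Next I would compute the relevant kernel. Since
\[
I - A^* = \begin{pmatrix} 0 & -1 \\ 0 & 0 \end{pmatrix},
\]
its kernel consists precisely of vectors whose second component vanishes, so $\bbZ^2 \cap \ker(I - A^*) = \bbZ \times \{0\}$. Pairing such an $m = (k,0)$ against $b = (\alpha,0)$ yields $k\alpha$, and hence
\[
\{ mb + n : n \in \bbZ,\ m \in \bbZ^2 \cap \ker(I - A^*) \} = \bbZ \alpha + \bbZ.
\]

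Before invoking \eqref{e.DFatm} and Theorem~\ref{t.main}, I would verify their hypotheses for the present setting: Lebesgue measure on $\bbT^2$ has full support, and it is $T_{\mathrm{ss}}$-ergodic by a classical theorem of Furstenberg whenever $\alpha$ is irrational. Combining these facts with Theorem~\ref{t.main} and the computation above then yields $k(E) \in \bbZ\alpha + \bbZ$ for every $E \in \bbR \setminus \Sigma$.

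Since the argument is a direct specialization of the quasi-periodic corollary already handled in the paper, I do not anticipate any substantive obstacle. The only things requiring any care are correctly expressing $T_{\mathrm{ss}}$ in the affine form $T_{A,b}$ (in particular getting the orientation of $A$ right so that $A \in \mathrm{SL}(2,\bbZ)$) and interpreting the pairing $mb$ in \eqref{e.DFatm} consistently with the convention used in \cite{DFGap}.
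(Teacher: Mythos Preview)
Your proposal is correct and follows essentially the same approach as the paper: identify $T_{\mathrm{ss}} = T_{A,b}$ with $A = \begin{pmatrix} 1 & 0 \\ 1 & 1 \end{pmatrix}$ and $b = (\alpha,0)^\top$, compute $\bbZ^2 \cap \ker(I-A^*) = \bbZ \oplus \{0\}$, and conclude via \eqref{e.DFatm} and Theorem~\ref{t.main}. Your added remarks on verifying the ergodicity and full-support hypotheses are fine but not strictly needed, as these are standing assumptions in the paper's setup.
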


\begin{proof}
Setting
$$
A = \begin{bmatrix} 1 & 0 \\ 1 & 1 \end{bmatrix} \quad \text{and} \quad b = \begin{bmatrix} \alpha \\ 0 \end{bmatrix},
$$
we observe that $T_{A,b} = T_{\mathrm{ss}}$, and hence the statement follows from Theorem~\ref{t.main} and \eqref{e.DFatm} since $\bbZ^2 \cap \ker(I-A^*) = \bbZ \oplus \{ 0 \}$.
\end{proof}

\begin{coro}[Gap Labelling for Jacobi Matrices Generated by the Cat Map]
Let $\{J_\omega\}_{\omega \in \Omega}$ be a topological family of Jacobi matrices defined by the cat map: $\Omega = \bbT^2$, $T = T_{\mathrm{cm}} : \bbT^2 \to \bbT^2, \; (\omega_1, \omega_2)^\top  \mapsto (2\omega_1 + \omega_2, \omega_1 + \omega_2)^\top$, $\mu = \mathrm{Leb}$. Then, for all $E \in \bbR \setminus \Sigma$, $k(E)$ belongs to $\bbZ$. In particular, for all choices of continuous sampling functions $q\in C(\bbT^2,\bbR)$ and $p \in C(\bbT^2,\bbC)$, the associated almost sure spectrum $\Sigma$ is connected, that is, it has no interior gaps.
\end{coro}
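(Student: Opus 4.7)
The plan is to apply Theorem~\ref{t.main} together with the general computation \eqref{e.DFatm}, in the same spirit as the two preceding corollaries. First I would recognize the cat map as $T_{A,b}$ with
\[
A = \begin{bmatrix} 2 & 1 \\ 1 & 1 \end{bmatrix}, \qquad b = \begin{bmatrix} 0 \\ 0 \end{bmatrix},
\]
and note that Lebesgue measure is $T_{A,b}$-ergodic (in fact mixing) with full support, so the hypotheses required for \eqref{e.DFatm} are satisfied. This reduces the problem to the computation of $\bbZ^2 \cap \ker(I - A^*)$.

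Next I would carry out the elementary linear-algebra check. Since $A$ is symmetric, $A^* = A$, and
\[
\det(I - A) = \det \begin{bmatrix} -1 & -1 \\ -1 & 0 \end{bmatrix} = -1 \neq 0,
\]
so $I - A^*$ is invertible and $\bbZ^2 \cap \ker(I - A^*) = \{0\}$. Substituting into \eqref{e.DFatm} yields $\frA(\bbT^2, T_{\mathrm{cm}}, \mathrm{Leb}) = \bbZ$, and combining with Theorem~\ref{t.main} gives $k(E) \in \bbZ \cap [0,1] = \{0, 1\}$ for every $E \in \bbR \setminus \Sigma$.

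For the ``in particular'' assertion I would invoke the standard properties that $k$ is nondecreasing with $\lim_{E \to -\infty} k(E) = 0$ and $\lim_{E \to +\infty} k(E) = 1$, together with $\supp(\kappa) = \Sigma$. These facts force $k \equiv 0$ on $(-\infty, \min \Sigma)$ and $k \equiv 1$ on $(\max \Sigma, +\infty)$, while on any bounded connected component of $\bbR \setminus \Sigma$ the constant value of $k$ would have to lie strictly between $0$ and $1$. Such a bounded component would therefore violate $k(E) \in \{0, 1\}$, so no such component exists; that is, $\Sigma$ has no interior gaps.

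There is essentially no obstacle here, since all the substantive work has been packaged into Theorem~\ref{t.main} and the formula \eqref{e.DFatm}. The only mildly subtle point is the exclusion of interior gaps carrying label $0$ or $1$, which is immediate from the monotonicity of $k$ and the identity $\supp(\kappa) = \Sigma$.
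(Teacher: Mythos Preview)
Your proof is correct and follows essentially the same approach as the paper: identify the cat map as $T_{A,b}$ with the given $A$ and $b=0$, observe that $\ker(I-A^*)$ is trivial so that \eqref{e.DFatm} gives $\frA = \bbZ$, and then note that an interior gap would force $k$ to take a value strictly between $0$ and $1$. The only differences are cosmetic---you spell out the determinant computation and the monotonicity argument for the second statement in slightly more detail than the paper does.
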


\begin{proof}
Setting
$$
A = \begin{bmatrix} 2 & 1 \\ 1 & 1 \end{bmatrix} \quad \text{and} \quad b = \begin{bmatrix} 0 \\ 0 \end{bmatrix},
$$
we observe that $T_{A,b} = T_{\mathrm{cm}}$, and hence the first statement follows from Theorem~\ref{t.main} and \eqref{e.DFatm} since $\ker(I-A^*)$ is trivial.

The second statement following immediately from the first because on any interior gap of $\Sigma$, the IDS must take a value strictly between $0$ and $1$, which is impossible because only integers are allowed values.
\end{proof}

Schwartzman groups are computed for several additional examples in \cite{DFGap} and give, in combination with Theorem~\ref{t.main}, gap labelling results for the topological families of Jacobi matrices defined over them.

Additionally, the recent work \cite{DFdm} discussed Schr\"odinger operators generated by the doubling map on the circle and showed that the associated almost sure essential spectra are always connected. We show here that this result extends to the  Jacobi matrix case when the off-diagonals are bounded away from zero. The doubling map $T_\mathrm{dm} : \bbT \to \bbT$ is given by $T \omega = 2 \omega$. The normalized Lebesgue measure $\mu = \mathrm{Leb}$ on $\bbT$ is known to be ergodic. As $T_\mathrm{dm}$ is continuous but not invertible, we can proceed as above and associate for given $q \in C(\bbT,\bbR)$, $p \in C(\bbT,\bbC)$, and $\omega \in \bbT$ half-line Jacobi matrices as in \eqref{e.jacmat} by setting $a_\omega(n) = p(T_\mathrm{dm}^n \omega)$ and $b_\omega(n) = q(T_\mathrm{dm}^n \omega)$ for $n \in \bbZ_+$. There is now an almost sure essential spectrum, $\Sigma$, that is, for $\mu$-almost every $\omega \in \bbT$, we have $\sigma_\mathrm{ess}(J_\omega) = \Sigma$. Under the additional assumption that the range of the off-diagonal sampling function $p$ is contained in $\bbC^*=\bbC\setminus \{0\}$, this set does not have any gaps, as the following theorem shows.

\begin{theorem}[Absence of Spectral Gaps for Doubling Map Jacobi Matrices]\label{t.doublingmap}
Consider the doubling map $T_\mathrm{dm} : \bbT \to \bbT$. Then, for every $p \in C(\bbT,\bbC^*)$ and every $q \in C(\bbT,\bbR)$, the associated Lebesgue almost sure essential spectrum $\Sigma$ of the associated half-line Jacobi matrices is connected.
\end{theorem}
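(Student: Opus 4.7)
The plan is to lift the half-line doubling-map system to the invertible natural extension (the dyadic solenoid), invoke Theorem~\ref{t.main} there, and then explicitly compute the resulting Schwartzman group.

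\textbf{Reduction and lift.} First, I would reduce to positive off-diagonals: since $p:\bbT\to\bbC^*$ is nowhere vanishing, a diagonal unitary gauge chosen pointwise in $\omega$ conjugates $J_\omega$ to the Jacobi matrix with the same $q$ and with off-diagonals $|p(T_\mathrm{dm}^n\omega)|$ in place of $p(T_\mathrm{dm}^n\omega)$; this is an operator-by-operator isospectral transformation, so we may assume $p\in C(\bbT,(0,\infty))$. Next, I would pass to the natural extension: let $\widehat{\bbT}=\varprojlim(\bbT\xleftarrow{T_\mathrm{dm}}\bbT\xleftarrow{T_\mathrm{dm}}\cdots)$ be the dyadic solenoid, $\widehat{T}$ its shift homeomorphism, and $\widehat{\mu}$ the unique $\widehat{T}$-invariant Borel probability measure with $\pi_*\widehat{\mu}=\mathrm{Leb}$, where $\pi:\widehat{\bbT}\to\bbT$ is the canonical projection. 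Setting $\widehat{p}=p\circ\pi$ and $\widehat{q}=q\circ\pi$ produces a topological family $\{\widehat{J}_{\widehat{\omega}}\}_{\widehat{\omega}\in\widehat{\bbT}}$ of whole-line Jacobi matrices over the ergodic homeomorphism $(\widehat{\bbT},\widehat{T},\widehat{\mu})$, exactly the setting of Theorem~\ref{t.main}.

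\textbf{Matching spectra and IDS.} Denoting by $\widehat{\Sigma}$ and $\widehat{k}$ the a.s.\ spectrum and IDS of the lifted family, I would show $\Sigma=\widehat{\Sigma}$ and $k=\widehat{k}$. The IDS-equality is immediate from \eqref{e.DOSM}, whose right-hand side only depends on positive-$n$ coefficients, and these coincide for $\widehat{J}_{\widehat{\omega}}$ and $J_{\pi(\widehat{\omega})}$. For the spectra, $\Sigma\subseteq\widehat{\Sigma}$ since $J_{\pi(\widehat{\omega})}$ is the positive half of $\widehat{J}_{\widehat{\omega}}$ modulo a rank-at-most-two boundary term. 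For the reverse inclusion I would use density of $\widehat{T}$-orbits: any Weyl sequence for $(\widehat{J}_{\widehat{\omega}},E)$ can be translated via the covariance $S_n\widehat{J}_{\widehat{\omega}}S_n^{-1}=\widehat{J}_{\widehat{T}^{-n}\widehat{\omega}}$ until it sits strictly in positive indices, yielding a Weyl sequence for $(J_{\pi(\widehat{T}^{-n}\widehat{\omega})},E)$, and $\pi_*\widehat{\mu}=\mathrm{Leb}$ transfers the conclusion to Leb-a.e.\ base point. Theorem~\ref{t.main} then gives $k(E)\in\frA(\widehat{\bbT},\widehat{T},\widehat{\mu})\cap[0,1]$ for every $E\in\bbR\setminus\Sigma$.

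\textbf{Schwartzman computation and conclusion.} The main work is to establish $\frA(\widehat{\bbT},\widehat{T},\widehat{\mu})=\bbZ$. Let $X$ be the suspension of $(\widehat{\bbT},\widehat{T})$, viewed as a fibration over $\bbT$ with fiber $\widehat{\bbT}$ and monodromy $\widehat{T}$. The associated Wang exact sequence in \v{C}ech cohomology reads
\[
H^0(\widehat{\bbT};\bbZ)\xrightarrow{\widehat{T}^*-\mathrm{id}}H^0(\widehat{\bbT};\bbZ)\to H^1(X;\bbZ)\to H^1(\widehat{\bbT};\bbZ)\xrightarrow{\widehat{T}^*-\mathrm{id}}H^1(\widehat{\bbT};\bbZ).
\]
One has $H^0(\widehat{\bbT};\bbZ)=\bbZ$ with $\widehat{T}^*=\mathrm{id}$, while $H^1(\widehat{\bbT};\bbZ)\cong\varinjlim(\bbZ\xrightarrow{2}\bbZ\xrightarrow{2}\cdots)=\bbZ[\tfrac{1}{2}]$, with $\widehat{T}^*$ acting as multiplication by $2$ (since $T_\mathrm{dm}^*$ is multiplication by $2$ on $H^1(\bbT;\bbZ)$ and the natural-extension projections intertwine this action). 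Hence $\widehat{T}^*-\mathrm{id}$ is multiplication by $1$ on $\bbZ[\tfrac{1}{2}]$, an isomorphism, and the sequence collapses to $H^1(X;\bbZ)\cong\bbZ$, generated by the time-coordinate class $[\widehat{\omega},t]\mapsto e^{2\pi it}$ whose Schwartzman number is $1$. Under the identification $[X,\bbT]\cong\check{H}^1(X;\bbZ)$, this yields $\frA(\widehat{\bbT},\widehat{T},\widehat{\mu})=\bbZ$, so $k(E)\in\bbZ\cap[0,1]=\{0,1\}$ for every $E\in\bbR\setminus\Sigma$; any interior gap would force a strictly intermediate IDS value, contradicting this. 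The principal obstacle is the Schwartzman calculation itself --- pinning down the action of $\widehat{T}^*$ as multiplication by $2$ (rather than by $\tfrac{1}{2}$) on $\check{H}^1(\widehat{\bbT};\bbZ)$, and justifying $[X,\bbT]\cong\check{H}^1(X;\bbZ)$ in this non-CW setting --- which essentially recasts the gap-labelling argument of \cite{DFdm} for the Schr\"odinger doubling map in the Schwartzman framework.
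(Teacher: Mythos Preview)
Your argument is correct, but it takes a genuinely different route from the paper's. The paper never computes the Schwartzman group of the solenoid. Instead, having reduced to real $p$ via Proposition~\ref{p.takingabsval}, it exploits the fact that $p$ is nowhere vanishing to invoke Proposition~\ref{pr.stablesection}, making the \emph{stable} section $\stabSect$ genuinely invariant. Since $\stabSect$ is determined by the forward orbit alone---and forward orbits depend only on the circle coordinate by \eqref{e.potentialscoincide}---the interpolated map $\widetilde\Lambda^+:\widetilde X\to\bbR\bbP^1$ factors through the suspension of $(\bbT,T_{\mathrm{dm}})$. The paper then quotes \cite{DFdm} for the statement that any continuous map factoring this way has integer Schwartzman number, and concludes.

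You instead apply Theorem~\ref{t.main} to the solenoid family as a black box (hence implicitly using the \emph{unstable} section) and then compute $\frA(\widehat\bbT,\widehat T,\widehat\mu)=\bbZ$ directly via the Wang sequence. Your cohomological computation is correct: $\check H^1(\widehat\bbT;\bbZ)\cong\bbZ[\tfrac12]$ with $\widehat T^*$ acting as multiplication by $2$, so $\widehat T^*-\mathrm{id}$ is an isomorphism and the sequence yields $\check H^1(\widetilde X;\bbZ)\cong\bbZ$. The identification $[\widetilde X,\bbT]\cong\check H^1(\widetilde X;\bbZ)$ that you flag as an obstacle is standard for compact Hausdorff spaces (Bruschlinsky's theorem), so it is not a genuine difficulty. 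What your route buys is significant: once you invoke Theorem~\ref{t.main}, the hypothesis $p\in C(\bbT,\bbC^*)$ is never used again---your gauge reduction is superfluous, and the Wang-sequence step is purely topological. Your argument therefore proves the conclusion for arbitrary $p\in C(\bbT,\bbC)$, which resolves the open problem raised in Remark~\ref{r.concludingrem}. The paper's approach, by contrast, genuinely needs $p$ nonvanishing in order for the stable section to be invariant and hence usable.
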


\begin{remark}
(a) For the Schr\"odinger case, $q \equiv 1$, this result was obtained in \cite{DFdm}.

(b) A similar statement holds for general linear expanding maps of the circle, $\omega \mapsto m \omega$, with an integer $m \ge 2$. The proof is analogous to the proof we give in the special case $m = 2$.

(c) Our proof needs the assumption that the off-diagonal sampling function avoids zero, and hence the result is limited to the case of Jacobi matrices whose off-diagonals are bounded away from zero. On a technical level, the proof of the result crucially uses the stable section of the associated cocycle at an energy in a gap of the essential spectrum, as it is this section that is independent of the past and hence can be defined for non-invertible base dynamics. On the other hand, Johnson-Schwartzman gap labelling requires a framework associated with an invertible dynamical system and hence in order to invoke it one needs to pass from the doubling map to the Smale solenoid. One then does have an unstable section, but as this section is no longer independent of the past, the more intricate topology of the solenoid places fewer restrictions on the rotation number. In particular, using the unstable section one is unable to see why the rotation number must be an integer. If $p$ takes the value zero, the stable section cannot be used to evaluate the Schwartzman homomorphism, and hence this case eludes our argument. We will revisit this discussion and give more details in Remark~\ref{r.concludingrem}, after having given the proof of Theorem~\ref{t.doublingmap}.
\end{remark}

\subsection*{Acknowledgements} D.D.\ was supported in part by NSF grants DMS--1700131 and DMS--2054752. J.F.\ was supported in part by Simons Foundation Collaboration Grant \#711663. Z.Z.\ was supported in part by NSF grant DMS--1764154. The authors thank the American Institute of Mathematics for hospitality and support through the SQuaRE program during a remote meeting in January~2021 and a January~2022 visit, during which part of this work was performed. The authors also gratefully acknowledge support from the Simons Center for Geometry and Physics, at which some of this work was done.

\section{Preliminaries}

\subsection{Oscillation Theory for Jacobi Matrices}

In this subsection we briefly recall oscillation theory for finite and half-line Jacobi matrices with positive off-diagonal entries. This does not use the underlying dynamics, so we leave the ergodic setting and consider a deterministic Jacobi matrix. For $N \in \bbN$, we denote by $\scrJ_N$ the set of $N \times N$ Jacobi matrices with $a(n) >0$ for all $0 \le n < N-1$. We write $\scrJ_\infty$ for the half-line Jacobi matrices with $a(n)>0$ for all $n \in \bbZ_+$.

Let us first consider the case $N = \infty$. Given $E \in \bbR$, consider the solution $u_E$ of $Ju=E u$ satisfying $u(0)=1$ and  interpolate linearly between consecutive integers to get a function $u_E:[0,\infty) \to \bbR$. For $m \in \bbN$, let 
\begin{equation}
F_m(E) = \#\{x \in (0,m) : u_E(x) = 0\},
\end{equation}
that is, $F_m(E)$ denotes the number of zeros of $u_E$  in the open interval $(0,m)$. On the other hand, if $N<\infty$, choose $a(N-1)>0$ arbitrarily and use this to define $u_E(n)$ for $0 \le n \le N$ so that $u_E(-1)=0$, $u_E(0)=1$ and
\begin{equation}
a(n-1)u_E(n-1) + b(n)u_E(n) + a(n) u_E(n+1) = E u_E(n), \quad \forall \, 0 \le n \le N-1,
\end{equation}
which we note defines $u_E(n)$ for all $0 \le n \le N$. As before, interpolate linearly between consecutive integers and let $F_m(E)$ denote the number of zeros of the interpolated $u_E$ in the interval $(0,m)$.

For any $1 \le m < N+1$, define $J_m$ to be the restriction of $J$ to $[0,m)\cap \bbZ$, that is,
\begin{equation}\label{e.finiteJMblock}
J_m =
\begin{bmatrix}
b(0) & a(0) \\
a(0) & b(1) & \cdots \\
&\cdots & \cdots & \cdots \\
&& \cdots & b(m-2) & a(m-2) \\
&&& a(m-2) & b(m-1)
\end{bmatrix}.
\end{equation}

The \emph{oscillation theorem} establishes a relationship between these objects: the number of eigenvalues of $J_m$ that exceed $E$ is precisely $F_m(E)$.

\begin{theorem}[Oscillation Theorem] \label{t:osc}
Let $N \in \bbN \cup \{\infty\}$ and $J \in \scrJ_N$ be given.
For any $E$ and any $m \in \bbN$ with $1 \le m < N+1$,
\begin{equation}\label{e.oscillation}
F_m(E) = \#[\sigma(J_m) \cap (E,\infty)].
\end{equation}
\end{theorem}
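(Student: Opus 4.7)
The plan is to prove the identity by induction on $m$, leveraging two ingredients: the classical identification of $u_E(m)$ with a positive multiple of the characteristic polynomial $\det(EI - J_m)$, and the Cauchy interlacing theorem. Expanding $\det(EI - J_m)$ by cofactors along the last row and comparing with the three-term recursion satisfied by $u_E$, one verifies
\[
u_E(m) \;=\; \frac{\det(EI - J_m)}{\prod_{k=0}^{m-1} a(k)}.
\]
Since the $a(k)$ are strictly positive, the zeros of $E \mapsto u_E(m)$ are precisely the eigenvalues of $J_m$, which are all simple and real, and $u_E(m) > 0$ for $E > \max \sigma(J_m)$. Writing $N_m(E) := \#[\sigma(J_m) \cap (E,\infty)]$, so that the goal becomes $F_m(E) = N_m(E)$, we therefore have $\sgn u_E(m) = (-1)^{N_m(E)}$ at every $E$ with $u_E(m) \neq 0$.

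For the induction, the base case $m=1$ amounts to checking that the linear interpolant of $(u_E(0),u_E(1)) = (1,(E-b(0))/a(0))$ has a zero in $(0,1)$ iff $E < b(0)$, which is immediate. For the inductive step from $m$ to $m+1$, I would first restrict attention to \emph{generic} $E$ with $u_E(k) \neq 0$ for all $1 \le k \le m+1$; on this set, $F_{m+1}(E) - F_m(E)$ equals $1$ if $u_E(m)\,u_E(m+1)<0$ and $0$ otherwise. Combined with the sign identity $\sgn u_E(k) = (-1)^{N_k(E)}$, this difference equals the parity of $N_{m+1}(E) - N_m(E)$. Cauchy interlacing applied to $J_m$ as a principal submatrix of $J_{m+1}$ (with strict interlacing, thanks to $a(m-1) > 0$) yields $N_{m+1}(E) - N_m(E) \in \{0,1\}$, so parity coincides with value; this closes the induction on the (dense) set of generic $E$.

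The delicate point, which I expect to be the main obstacle, is extending the equality from generic $E$ to every $E \in \bbR$. The cleanest route is to show directly that $E \mapsto F_m(E)$ and $E \mapsto N_m(E)$ are integer-valued, piecewise constant, right-continuous functions whose only discontinuities lie in $\sigma(J_m)$, with a jump of exactly $1$ at each eigenvalue. For $N_m$ this is immediate from simplicity of the spectrum of $J_m$. For $F_m$ one argues that zeros of the interpolated $u_E$ vary continuously with $E$, and can enter or leave $(0,m)$ only through the right endpoint $m$: the left endpoint is ruled out by $u_E(0) \equiv 1$, interior pair-creation is ruled out because the recursion together with $a(\cdot)>0$ forbids two consecutive integer zeros of $u_E$, and a zero crosses $m$ exactly when $u_E(m) = 0$, i.e., $E \in \sigma(J_m)$. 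Matching the jumps at each eigenvalue and the common value $F_m(E) = N_m(E) = 0$ for $E > \max\sigma(J_m)$ then forces $F_m \equiv N_m$ everywhere.
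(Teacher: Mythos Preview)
Your argument is sound. The key observations---$u_E(m) = \det(EI-J_m)/\prod_{k=0}^{m-1} a(k)$, strict Cauchy interlacing, and the fact that $u_\mu(j)=0$ forces $u_\mu(j-1)$ and $u_\mu(j+1)$ to have opposite signs via the recursion $a(j-1)u_\mu(j-1)+a(j)u_\mu(j+1)=0$---together rule out interior pair creation and pin the only jumps of $F_m$ to $\sigma(J_m)$, matching those of $N_m$. One small comment: the final ``matching jumps'' argument you sketch in the last paragraph already yields $F_m \equiv N_m$ on its own, so the inductive portion is redundant (though not incorrect).

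For comparison: the paper does not prove this statement at all; it simply records it as classical and cites Simon's oscillation-theory survey and Teschl's monograph. Those references typically proceed via Pr\"ufer-type variables and Sturm comparison rather than through characteristic polynomials and Cauchy interlacing as you do. Your route is more algebraic and arguably more transparent for the finite-block application the paper actually needs (cf.\ Remark~\ref{r.oscillation}), since it makes the independence of $F_m(E)$ from $a(m-1)$ manifest: $u_E(0),\ldots,u_E(m)$ do not involve $a(m-1)$, and the sign of $u_E(m+1)$ relative to $u_E(m)$ is unchanged by rescaling $a(m-1)>0$.
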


\begin{proof}
This is a well-known result; see e.g., \cite[Section~2]{Simon2005:OscTh} or \cite[Chapter~4]{Teschl2000:Jacobi}.
\end{proof}

\begin{remark}\label{r.oscillation}
Let us point out the following: the right-hand side of \eqref{e.oscillation} is fully determined by $J_m$ and $E$, and it is in particular independent of $a(m-1)$. On the other hand, the left-hand side of \eqref{e.oscillation} seemingly depends on $a(m-1)$, as the solution $u_E$ used in the definition of $F_m(E)$ depends on it. The resolution of this conundrum is that, while the solution on the interval in question depends on $a(m-1)$, the number of zeros of its interpolation does not, so long as $a(m-1) > 0$. This simple observation will play a key role in our proof of Theorem~\ref{t.main} in cases where the off-diagonal terms vanish and the infinite Jacobi matrix splits into finite blocks. These blocks will then naturally take the form \eqref{e.finiteJMblock} and hence define an expression as in the right-hand side of \eqref{e.oscillation}. We can then equate this expression via an application  of Theorem~\ref{t:osc} with the left-hand side of \eqref{e.oscillation} by simply pretending that the block in question is continued to the right with positive off-diagonal terms!
\end{remark}

\subsection{The Alkorn--Zhang Extension of Johnson's Theorem}\label{ss.AZ}

In this subsection we briefly summarize some relevant material from the paper \cite{AlkornZhang} by Alkorn and Zhang, which proved a very general version of Johnson's theorem for Jacobi matrices, both deterministic and dynamically defined. We begin with the definition of $\bbM(2,\bbC)$-cocycles that enjoy a dominated splitting and then state a result that uses this notion to characterize the set of energies outside the spectrum of a dynamically defined Jacobi matrix for an initial point that has a dense orbit. 

Let $\Omega$ be a compact metric space $\Omega$,  $T$ be a homeomorphism $\Omega \to \Omega$, and $B\in C(\Omega,\bbM(2,\bbC))$ be a continuous cocycle map, where $\bbM(2,\bbC)$ denotes the set of $2 \times 2$ matrices with complex entries. Iterates of the cocycle are given by
$$%\begin{equation}
B_0(\omega) = I, \quad B_n(\omega) = B(T^{n-1}\omega) \cdots B(T\omega)B(\omega), \quad n \in \bbN.
$$%\end{equation}  
In the following definition, we identify $z\in\bbC\bbP^1=\bbC\cup\{\infty\}$ with a one-dimensional subspace of $\bbC^2$ spanned by $(1,z)^\top$ and $\infty$ with the one spanned by $\vec{e}_2 := (0,1)^\top$.

\begin{definition}\label{d:domination_dynamical}
	Let $(\Omega,T)$ and $B$ be as above. Then we say $(T,B)$ has a \emph{dominated splitting} if there are two continuous maps $\stabSect, \unstSect:\Omega\to \bbC\bbP^1$ with the following properties:
	\begin{enumerate}
		\item $B(\omega)[\stabSect(\omega)]\subseteq \stabSect(T\omega)$ and $B(\omega)[\unstSect(\omega)]\subseteq \unstSect(T\omega)$ for all $\omega\in\Omega$.
		\item There are $N\in\bbZ_+$ and $\rho>1$ such that
		$$
		\|B_N(\omega)\vec u\|> \rho \|B_N(\omega)\vec s\|
		$$
		for all $\omega\in\Omega$ and all unit vectors $\vec u\in \unstSect(\omega)$ and $\vec s\in \stabSect(\omega)$.
	\end{enumerate}
\end{definition}

\begin{remark}\label{r:dyna_DS_implies}
	Condition~(2) above clearly implies that $B(\omega)\unstSect(\omega)\neq \{\vec 0\}$ for all $\omega\in\Omega$, which together with condition~(1) implies
	\begin{equation}\label{eq:nonzero_image_u}
	B(\omega)\unstSect(\omega)=\unstSect(T\omega)\mbox{ for all }\omega\in\Omega.
	\end{equation}
We also note that Condition~(2) clearly forces $\unstSect(\omega) \neq \stabSect(\omega)$ for all $\omega$ and hence
\[
\bbC^2 = \unstSect(\omega) \oplus \stabSect(\omega)
\]
for every $\omega \in \Omega$.
\end{remark}

It is often sufficient to identify some invariant continuous section, and Remark~\ref{r:dyna_DS_implies} shows that the unstable section $\unstSect$ can serve the desired purpose. This will indeed be sufficient for our proof of Theorem~\ref{t.main}. However, there are scenarios where it is crucial to also have the corresponding properties for the stable section $\stabSect$, and in particular
\begin{equation}\label{eq:nonzero_image_s}
B(\omega)\stabSect(\omega)=\stabSect(T\omega)\mbox{ for all }\omega\in\Omega.
\end{equation}
Indeed, our proof of Theorem~\ref{t.doublingmap} relies on this in an essential way.

A sufficient condition for the stable section to be truly invariant is given in the following proposition.

\begin{prop}\label{pr.stablesection}
Let $(\Omega,T)$ and $B$ be as above and suppose that $(T,B)$ has a dominated splitting in the sense of Definition~\ref{d:domination_dynamical}. If $\det B(\omega) \not= 0$ for every $\omega \in \Omega$, then the stable section is invariant, that is, it satisfies \eqref{eq:nonzero_image_s}.
\end{prop}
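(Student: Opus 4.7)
The plan is essentially to observe that the proposition follows from a short linear-algebra argument: under the invertibility hypothesis, the containment built into the definition of dominated splitting automatically upgrades to equality, because $B(\omega)$ maps one-dimensional subspaces to one-dimensional subspaces.

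More concretely, I would proceed as follows. Fix $\omega \in \Omega$. By Condition~(1) in Definition~\ref{d:domination_dynamical}, we have the inclusion $B(\omega)[\stabSect(\omega)] \subseteq \stabSect(T\omega)$. Pick any nonzero vector $\vec s \in \stabSect(\omega) \subseteq \bbC^2$; since $\det B(\omega) \neq 0$, the matrix $B(\omega)$ is invertible, hence $B(\omega)\vec s \neq \vec 0$. The image $B(\omega)[\stabSect(\omega)]$ is therefore the one-dimensional subspace of $\bbC^2$ spanned by $B(\omega)\vec s$. As $\stabSect(T\omega)$ is itself one-dimensional and contains this image, the two subspaces must coincide, which is precisely \eqref{eq:nonzero_image_s}.

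There is no serious obstacle here; the only thing worth emphasizing is why the nonvanishing of $\det B(\omega)$ is the right hypothesis. In the non-invertible setting (as will be relevant later for the doubling map and its suspension to the solenoid, cf.~Theorem~\ref{t.doublingmap} and the accompanying remark), one might a priori have $B(\omega)\vec s = \vec 0$ for some $\vec s \in \stabSect(\omega)$, in which case $B(\omega)[\stabSect(\omega)] = \{\vec 0\}$ no longer corresponds to a point of $\bbC\bbP^1$ and equality with $\stabSect(T\omega)$ fails; this is exactly the failure mode that the assumption $\det B(\omega)\neq 0$ rules out. In contrast, the analogous statement \eqref{eq:nonzero_image_u} for the unstable section is free (cf.~Remark~\ref{r:dyna_DS_implies}) because the expansion estimate in Condition~(2) already precludes $B(\omega)\unstSect(\omega) = \{\vec 0\}$ without any invertibility hypothesis.
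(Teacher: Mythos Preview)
Your proof is correct and follows essentially the same approach as the paper: use Condition~(1) for the inclusion, invoke invertibility of $B(\omega)$ to ensure the image is one-dimensional, and conclude equality by dimension. Your additional commentary contrasting the stable and unstable sections is accurate and helpful context, though not strictly needed for the proof itself.
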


\begin{proof}
From the definition of dominated splitting, one has $B(\omega)\stabSect(\omega) \subseteq \stabSect(T\omega)$ for each $\omega$. The assumption $\det B(\omega) \neq 0$ implies that $B(\omega)\stabSect(\omega)$ is a one-dimensional subspace of the one-dimensional space $\stabSect(T\omega)$, which forces $B(\omega)\stabSect(\omega) = \stabSect(T\omega)$, as promised.
\end{proof}

In the setting of Jacobi matrices, we will consider the one-parameter family of cocycle maps (cf. \cite[Section~5]{AlkornZhang}) given by
\begin{equation} \label{eq:BlambdaCocycle}
B^E(\omega)
= \begin{bmatrix} E - q(\omega) & -\overline{p(T^{-1}\omega)} \\ p(\omega) & 0 \end{bmatrix}, \quad E \in \bbC, \ \omega \in \Omega.
\end{equation}
Let
\begin{equation}
\DS :=\{E \in\bbC: (T,B^E) \text{ has a dominated splitting}\}.
\end{equation}
Then we have \cite[Theorem~7]{AlkornZhang}:
\begin{theorem}\label{t.jacobi_johnson}
	Consider the Jacobi operators $J_\omega$, $\omega\in\Omega$, given by \eqref{e.jacmat}. Assume that $T$ is topologically transitive and let $\omega_0$ be any point that has a dense orbit. Then
	$$
	\rho(J_{\omega_0})= \DS.
	$$
	In particular, if $\mu$ is a fully supported ergodic measure on $\Omega$ and $\Sigma$ denotes the associated almost-sure spectrum, then  $\Sigma = \bbR \setminus \DS$.
\end{theorem}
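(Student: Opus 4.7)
The plan is to prove the two containments $\DS \subseteq \rho(J_{\omega_0})$ and $\rho(J_{\omega_0}) \subseteq \DS$ separately, adapting Johnson's strategy for Schr\"odinger operators \cite{Johnson1986JDE} to the Jacobi setting where the cocycle $B^E$ may fail to be invertible. Since the second conclusion ($\Sigma = \bbR \setminus \DS$) follows at once from the first by intersecting with $\bbR$ and using $\Sigma = \sigma(J_\omega)$ for $\mu$-a.e.\ $\omega$, the task reduces to the deterministic statement.

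\emph{For $\DS \subseteq \rho(J_{\omega_0})$:} Given $E \in \DS$ with invariant sections $\stabSect, \unstSect$, I would evaluate these along the orbit $\{T^n \omega_0\}$ and pull them back to the eigenvalue equation, producing formal solutions $u^\rms, u^\rmu$ of $J_{\omega_0} u = E u$. The contraction/expansion condition in Definition~\ref{d:domination_dynamical}(2), combined with compactness of $\Omega$, should upgrade to uniform exponential rates, placing $u^\rms \in \ell^2(\bbZ_+)$ and $u^\rmu \in \ell^2(\bbZ_-)$. The classical Jacobi-matrix Wronskian formalism then assembles these into a Green's function with exponential off-diagonal decay, which yields a bounded inverse of $J_{\omega_0} - E$ on $\ell^2(\bbZ)$.

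\emph{For $\rho(J_{\omega_0}) \subseteq \DS$:} Given $E \in \rho(J_{\omega_0})$, a Combes--Thomas-type estimate (which applies because $p$ and $q$ are bounded) produces exponentially decaying solutions of $J_{\omega_0} u = E u$ at $\pm \infty$. I would then define $\stabSect$ along the orbit of $\omega_0$ as the one-dimensional direction in $\bbC\bbP^1$ spanned by the forward-decaying solution, and $\unstSect$ as that of the backward-decaying solution; invariance is immediate from the transfer-matrix interpretation. Using density of the orbit and the fact that the decay rates are uniform in $\omega_0$ (a feature of being in the open set $\rho(J_{\omega_0})$), one can hope to extend both sections by uniform continuity to continuous maps $\Omega \to \bbC\bbP^1$, and condition (2) of Definition~\ref{d:domination_dynamical} follows from the uniform exponential decay rates.

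\emph{Main obstacle:} The central difficulty is the possible vanishing of $p$, which makes $B^E(\omega)$ singular: the stable section can satisfy $B^E(\omega)\stabSect(\omega) = \{\vec 0\}$, and Proposition~\ref{pr.stablesection} is unavailable. In the first direction, one must verify that the formal solution built from $\stabSect$ behaves correctly at indices where $p$ vanishes---here the observation in Remark~\ref{r.oscillation}, that quantities defined by the finite blocks are not affected by the choice of continuation across a zero, should play an essential role. In the converse direction, one must check that the lines spanned by the decaying solutions still extend continuously to $\Omega$ across points where $p(\omega) = 0$ and the operator formally decouples; in particular, one must allow $\stabSect(\omega) = \infty \in \bbC\bbP^1$ (the vertical line $\vec e_2$) and verify that this limit behavior is compatible with continuity in the projective topology. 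Controlling these singular points is the genuine novelty beyond the setting of Marx \cite{Marx2014Nonlin}, where such degeneracies were excluded by hypothesis, and I expect it to be the bulk of the technical work.
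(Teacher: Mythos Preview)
The paper does not give its own proof of Theorem~\ref{t.jacobi_johnson}: the result is quoted verbatim as \cite[Theorem~7]{AlkornZhang}, and the present paper uses it as a black box. So there is no in-paper argument to compare your proposal against; the comparison you want is with \cite{AlkornZhang}, not with this manuscript.

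That said, a few comments on your sketch. The two-inclusions strategy and the identification of the vanishing of $p$ as the essential new difficulty are both correct and in line with what \cite{AlkornZhang} actually does. However, your appeal to Remark~\ref{r.oscillation} in the direction $\DS \subseteq \rho(J_{\omega_0})$ is misplaced: that remark concerns the independence of zero-counting from the choice of continuation past a vanishing off-diagonal, which is an oscillation-theory statement used later in the proof of Theorem~\ref{t.main}, not a tool for building resolvents. What you really need at a zero of $p$ is that the operator splits and the resolvent of each finite block is controlled because $E \notin \Sigma \supseteq \sigma(J_r)$ (cf.\ Remark~\ref{rem:fullsupport}). In the converse direction, the step ``extend by uniform continuity from the dense orbit to all of $\Omega$'' is exactly the delicate point, and your proposal does not yet contain a mechanism for it: Combes--Thomas gives decay for the fixed operator $J_{\omega_0}$, but to get a \emph{continuous} section on $\Omega$ you need uniform exponential estimates for $B^E_n(\omega)$ for \emph{every} $\omega$, including those where $p(T^k\omega)=0$ and the cocycle degenerates. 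The Alkorn--Zhang argument handles this by working directly with the dominated-splitting characterization via cone fields / singular-value gaps rather than by extending solutions from a single orbit; you would need an analogous device, and your current outline does not supply one.
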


Let us briefly discuss the relationship between $B^E$ and solutions of $J_\omega u = E u$, as this will be important later. We do not exhaustively discuss all possibilities, just the two that are relevant in the proof of the main results. First, if $p(T^n\omega) \neq 0$ for all $n \in \bbZ$, then one can readily check
\begin{equation} \label{eq:cocycleSols}
B^E(T^n\omega)\begin{bmatrix}u(n) \\ u(n-1) \end{bmatrix}
= p(T^n\omega) \begin{bmatrix}  u(n+1) \\ u(n)  \end{bmatrix}
\end{equation}
for each $n \in \bbZ$ and any two consecutive entries determine $u$ uniquely. On the other hand, if $p(T^{n_r}\omega)=0$ for $\cdots  n_{-1} < n_0 < n_1 < \cdots$ and $p(T^n\omega) \neq 0$ for all other $n \in \bbZ$, then $u$ is uniquely determined by $\{u(n_r+1)\}_{r \in \bbZ}$ and one still has \eqref{eq:cocycleSols} for all $n \notin \{n_r\}_{r \in \bbZ}$. For $n=n_r$, one has
\[B^E(T^{n_r}\omega) \begin{bmatrix} u(n_r) \\ u(n_r-1) \end{bmatrix} = \vec{0}.\]

\section{Proof of Main Theorem}

In this section we prove our main result, Theorem~\ref{t.main}. Throughout the discussion, we assume $(\Omega,T)$ is topologically transitive, $\omega_0$ has a dense orbit, and $\mu$ is a fully supported $T$-ergodic Borel probability measure on $\Omega$. Suppose $J_\omega = J_{p,q,\omega}$ is the ergodic Jacobi matrix generated by $\omega \in \Omega$ and the sampling functions $p,q$. 
\begin{remark} \label{rem:fullsupport}
As mentioned in the introduction, the assumption that $\mu$ is fully supported implies that $\mu$-a.e.\ $\omega \in \Omega$ has  dense $T$-orbit and hence the almost-sure spectrum $\Sigma$ coincides with the spectrum of any $J_\omega$ for which $\omega$ has a dense orbit. Moreover, by strong operator approximation, one has $\Sigma \supseteq \sigma(J_\omega)$ for all $\omega \in \Omega$.
\end{remark} 
The cocycle map $B^E$ is defined by \eqref{eq:BlambdaCocycle} and the spectrum of $J_{\omega_0}$ (and hence also the almost-sure spectrum of the family $\{J_\omega\}_{\omega \in \Omega}$) is characterized by Theorem~\ref{t.jacobi_johnson} as the set of $E \in \bbR$ for which $(T,B^E)$ does not enjoy a dominated splitting.

The first step in the proof of Theorem~\ref{t.main} is to reduce to the case of non-negative off-diagonal terms. To this end, we want to argue that we can pass from the off-line sampling function $p$ to the non-negative off-diagonal sampling function $|p|$. To express this fact, let us make the dependence of the IDS $k$ and the almost sure spectrum $\Sigma$ on the sampling functions $p$ and $q$ explicit in the following proposition.

\begin{prop}\label{p.takingabsval}
We have $k_{p,q} = k_{|p|,q}$ and $\Sigma_{p,q} = \Sigma_{|p|,q}$.
\end{prop}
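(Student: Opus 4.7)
The plan is to exhibit a diagonal (gauge) unitary that conjugates $J_{p,q,\omega}$ to $J_{|p|,q,\omega}$ for every $\omega$, from which both equalities follow at the level of each individual operator and its finite truncations.

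For a fixed $\omega \in \Omega$, I would define a sequence of phases $\phi_\omega: \bbZ \to \bbR$ inductively, starting from $\phi_\omega(0) = 0$ and setting
\[
\phi_\omega(n+1) - \phi_\omega(n) = \begin{cases} -\Arg p(T^n\omega) & \text{if } p(T^n\omega) \neq 0, \\ 0 & \text{if } p(T^n\omega) = 0, \end{cases}
\]
with the analogous recursion for $n < 0$. Let $U_\omega : \ell^2(\bbZ) \to \ell^2(\bbZ)$ be the diagonal unitary $U_\omega \delta_n = e^{i\phi_\omega(n)} \delta_n$. A direct computation shows that $U_\omega^* J_{p,q,\omega} U_\omega$ preserves the diagonal $q(T^n\omega)$ and its off-diagonal at position $(n,n+1)$ equals $e^{-i\Arg p(T^n\omega)} p(T^n\omega) = |p(T^n\omega)|$ when $p(T^n\omega) \neq 0$, and $0 = |p(T^n\omega)|$ otherwise. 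Hence $U_\omega^* J_{p,q,\omega} U_\omega = J_{|p|,q,\omega}$ as bounded self-adjoint operators on $\ell^2(\bbZ)$, so $\sigma(J_{p,q,\omega}) = \sigma(J_{|p|,q,\omega})$ for every $\omega \in \Omega$. Invoking Remark~\ref{rem:fullsupport}, one gets $\Sigma_{p,q} = \Sigma_{|p|,q}$.

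For the IDS, the same idea applied to the finite truncations does the job. Restricting $U_\omega$ to $\ell^2(\{0,\dots,N-1\})$ yields a diagonal unitary which conjugates $J_{p,q,\omega}\chi_{[0,N)}$ to $J_{|p|,q,\omega}\chi_{[0,N)}$ as $N \times N$ matrices, so by the cyclicity of the trace, $\tr f(J_{p,q,\omega}\chi_{[0,N)}) = \tr f(J_{|p|,q,\omega}\chi_{[0,N)})$ for every continuous $f$. Passing to the limit in \eqref{e.DOSM} gives $\kappa_{p,q} = \kappa_{|p|,q}$, and thus $k_{p,q} = k_{|p|,q}$ via \eqref{e.IDS}.

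The only subtlety is that $p$ may vanish along the orbit, so the argument function is not globally defined; the proof deals with this by leaving $\phi_\omega$ constant across each vanishing site, which makes the gauge transformation well-defined and still produces the correct off-diagonal absolute value. I would not expect this to be a genuine obstacle — the Jacobi matrix decouples into blocks exactly at the vanishing sites of $p$, and the conjugation is carried out consistently inside each block. Note that measurability or continuity of $\omega \mapsto U_\omega$ is not needed, since the equality of operators is established pointwise in $\omega$.
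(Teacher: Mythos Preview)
Your proof is correct and follows essentially the same approach as the paper: both construct the diagonal gauge unitary with phases determined recursively by $e^{-i\Arg p(T^n\omega)}$ (with the convention that vanishing sites contribute trivially), and both deduce the IDS equality from unitary equivalence of the finite truncations. The only cosmetic difference is that you also carry out the whole-line conjugation and read off $\Sigma_{p,q}=\Sigma_{|p|,q}$ directly from $\sigma(J_{p,q,\omega})=\sigma(J_{|p|,q,\omega})$, whereas the paper instead infers the spectral equality from $k_{p,q}=k_{|p|,q}$ via $\Sigma=\supp(dk)$.
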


\begin{proof}
Note first that the almost sure spectrum is determined by the IDS via $\Sigma_{p,q} = \supp(dk_{p,q})$, and hence $k_{p,q} = k_{|p|,q}$ implies $\Sigma_{p,q} = \Sigma_{|p|,q}$. We can therefore focus on the first identity.

Recall from \eqref{e.DOSM} that the DOSM $\kappa_{p,q}$ is the almost sure weak limit of the finitely supported measures that place point masses of weight $1/N$ at the eigenvalues of $J_{p,q,\omega}^{(N)} := \chi_{_{[0,N)}}J_{p,q,\omega}\chi_{_{[0,N)}}$ (counted with multiplicity) and that the IDS is the accumulation function associated with the DOSM, see \eqref{e.IDS}. Thus, the identity $k_{p,q} = k_{|p|,q}$ follows once we show that $J_{p,q,\omega}^{(N)}$ and $J_{|p|,q,\omega}^{(N)}$ are unitarily equivalent for every $N \in \bbN$ and every $\omega \in \Omega$.

To see that they are indeed unitarily equivalent, define $\{ \lambda_n\}_{n \in \bbZ_+}$, by
\begin{equation}\label{e.lambdandef}
\lambda_0 = 1 , \quad \lambda_{n+1} = \lambda_n e^{-i \Arg(p(T^n \omega))},
\end{equation}
with the convention $e^{-i \Arg(0)}=1$. Note that for each $n \in \bbZ_+$, we have $|\lambda_n| = 1$, and hence $\overline{\lambda_n} = \lambda_n^{-1}$, which will be used below.

A short calculation shows that with $\Lambda^{(N)} = \mathrm{diag}(\lambda_0, \ldots, \lambda_{N-1})$, we have
\begin{equation}\label{e.unitaryequiv}
(\Lambda^{(N)})^* J_{p,q,\omega}^{(N)} \Lambda^{(N)} = J_{|p|,q,\omega}^{(N)}.
\end{equation}
Indeed, for $0 \le m,n \le N-1$, we have
\begin{align*}
\langle \delta_m, (\Lambda^{(N)})^* J_{p,q,\omega}^{(N)} \Lambda^{(N)} \delta_n \rangle & = \begin{cases} q(T^n \omega) & m = n \\ \overline{\lambda_{n+1}} \lambda_{n} \overline{p(T^{n} \omega)} & m = n+1 \\ \overline{\lambda_{n-1}} \lambda_{n} p(T^{n-1} \omega) & m = n - 1 \\ 0 & |m-n| \ge 2 \end{cases} \\
& = \begin{cases} q(T^n \omega) & m = n \\ |p(T^{n} \omega)| & m = n+1 \\ |p(T^{n-1} \omega)| & m = n - 1 \\ 0 & |m-n| \ge 2 \end{cases} \\
& = \langle \delta_m, J_{|p|,q,\omega}^{(N)} \delta_n \rangle,
\end{align*}
where we used \eqref{e.lambdandef} in the second step. The assertion follows from \eqref{e.unitaryequiv} since $\Lambda^{(N)}$ is unitary.
\end{proof}

\begin{proof}[Proof of Theorem~\ref{t.main}]
Due to Proposition~\ref{p.takingabsval} we may assume without loss of generality that
\begin{equation}\label{e.pnonneg}
p \geq 0.
\end{equation}
By Theorem~\ref{t.jacobi_johnson},
\[
\sigma(J_{\omega})= \bbR \setminus \DS = \Sigma 
\]
for a.e.\ $\omega$ and moreover for any $\omega$ having a dense orbit.

Given $E \in \DS$, write $\unstSect,\stabSect: \Omega \to \bbC\bbP^1$ for the unstable and stable sections, respectively. Since $p$ is real-valued, $\unstSect,\stabSect$ are \emph{real} sections, that is, $\unstSect(\omega), \stabSect(\omega) \in \bbR\bbP^1$ for all $\omega$.

Recall from Remark~\ref{r:dyna_DS_implies} that the \emph{unstable section} $\unstSect$ is genuinely invariant in the sense that
\begin{equation}
B^E(\omega)\unstSect(\omega) = \unstSect(T\omega);
\end{equation}
compare also \cite[Lemma~10 and Remark~5]{AlkornZhang}. Thus (since we need to pass to an interpolated object on the suspension), it is crucial for us to work with $\unstSect$, in contrast to the Schr\"odinger setting, in which one can freely work with either section.

Now consider the suspension $X = \Omega \times [0,1] / ((\omega,1) \sim (T\omega,0))$. Define $B_t^E$  for $t \in [0,1]$ by
\begin{align}\label{e.interpolation}
B_t^E
=\begin{cases}  \begin{bmatrix} \cos\frac{\pi}{2}\theta(t) & - \sin\frac{\pi}{2}\theta(t) \\ \sin\frac{\pi}{2}\theta(t)  & \cos\frac{\pi}{2}\theta(t) \end{bmatrix} & 0 \le t \le 1/3 \\[7mm]
\begin{bmatrix} \mu(t)(E - q(\omega)) & -1 \\ 1 & 0 \end{bmatrix} & 1/3 \le t \le 2/3 \\[7mm]
\begin{bmatrix} E - q(\omega) & -[(1-\eta(t))+\eta(t)p(T^{-1}\omega)] \\ (1-\eta(t))+\eta(t)p(\omega) & 0 \end{bmatrix} & 2/3 \le t \le 1.
\end{cases}
\end{align}
where $\theta$, $\mu$, and $\eta$ increase continuously from $0$ to $1$ on their respective domains. Then define
$$
\Lambda(\omega,t) = B_t^E(\omega)\unstSect(\omega).
$$
Observe that $\Lambda(\omega,t)$ is continuous and well-defined with values in $\bbR\bbP^1$, and one has
\begin{equation}
\Lambda(\omega,1) = \Lambda(T\omega,0)
\end{equation}
by invariance. The important ingredients that lead to this observation are the following:
\begin{itemize}
	\item[(i)] If $0\le t<1$, $\Lambda(\omega,t)$ is well-defined since the determinant of $B_t^E$ cannot vanish, which follows from \eqref{e.pnonneg} and \eqref{e.interpolation}.
	\item[(ii)] If $t=1$, then $\Lambda(\omega,1) = B^E(\omega)\unstSect(\omega)=\unstSect(T\omega)=\Lambda(T\omega, 0)\in\bbR\bbP^1$; compare Remark~\ref{r:dyna_DS_implies}.
\end{itemize}

Thus, $\Lambda$ descends to give a well-defined map $\overline\Lambda:X \to \bbR\bbP^1$ to which we may apply the Schwartzman homomorphism. Let us note that one can equivalently define $X$ as the quotient of $\Omega \times \bbR$ by $(\omega,t+n) \sim (T^n\omega,t)$, so we may write $\Lambda(\omega,t)$ and $\overline\Lambda([\omega,t])$ for $t \in \bbR \setminus [0,1]$ below as this is sometimes convenient.

To conclude, it suffices to show that
\begin{equation} \label{eq:jacobiSchwartzgoal}
\frF_\nu\left(\left[\overline\Lambda \right]\right) = 1-k(E),
\end{equation}
since $\bbZ \subseteq \frA$ by standard reasoning (see, e.g., \cite{DFGap}). There are two cases to consider, based on the behavior of the off-diagonal generating function.
\medskip

\textbf{\boldmath Case 1: $p$ is positive $\mu$-a.e.} This part follows from the line of reasoning in the Schr\"odinger case by passing to a suitable full-measure set. To keep the paper more self-contained, let us sketch the main steps. The assumption $p>0$ $\mu$-a.e.\ implies that there is a set $\Omega_\star$ of full $\mu$-measure such that all off-diagonal elements of $J_\omega$ are strictly positive for each $\omega \in \Omega_\star$. For each $\omega \in \Omega_\star$, one can argue exactly as in the proof of \cite[Theorem~1.1]{DFGap} to see that
\begin{equation} \label{eq:regcase:argTo1-k}
\lim_{t \to \infty} \frac{1}{\pi t} \Delta_{\Arg}^{[0,t]}\Lambda(\omega,\cdot) = 1-k(E),
\end{equation}
where $\Delta_{\Arg}^{[a,b]}$ denotes the net change in argument on the interval $[a,b]$ and we pass to a full-measure subset one more time if necessary to ensure the existence of the limit (compare \eqref{eq:schwartzmanArgument} and surrounding discussion). For the reader's convenience, let us describe this in more detail. Given $\omega \in \Omega_\star$, consider $u_{E,\omega}$, the solution of $J_\omega u = Eu$ satisfying $u(-1)= 0$, $u(0)=1$, denote 
$$\vec{u}_{E,\omega}(n) = \mathrm{span}\{(u_{E,\omega}(n), u_{E,\omega}(n-1))^\top\} \in \bbR\bbP^1$$ for $n \in \bbZ$, and then put $\vec{u}_{E,\omega}(n+t) = B_t^E(T^n\omega) \vec{u}_{E,\omega}(n)$ for $0 \le t \le 1$. Note that this is well-defined on account of \eqref{eq:cocycleSols}, which implies
\[\vec{u}_{E,\omega}(n+1) = B^E(T^n\omega)\vec{u}_{E,\omega}(n) = B_1^E(T^n\omega)\vec{u}_{E,\omega}(n).\]
From the explicit form of the homotopy $B_t^E$, one can verify that
\begin{equation} \label{eq:regcase:flipsArg}
\left\lfloor\frac{1}{\pi} \Delta_{\Arg}^{[0,n+1]}u_{E,\omega}(\cdot) \right\rfloor
= \#\set{j \in \bbZ \cap [0,n) : \sgn u_{E,\omega}(j) \neq \sgn u_{E,\omega}(j+1) }
\end{equation}
for each $n \in \bbN$ such that $u_{E,\omega}(n-1) \neq 0$. To relate this back to $\Lambda$, note that our choice of $\omega \in \Omega_\star$ implies that $B^E(T^n\omega)$ is invertible for all $n \in \bbZ$, from which one deduces
\begin{equation} \label{eq:regcase:lambdatracksue}
\Delta_{\Arg}^{[0,t]}u_{E,\omega}(\cdot) = \Delta_{\Arg}^{[0,t]}\Lambda(\omega,\cdot) + O(1)
\end{equation}
as $t \to \infty$. Thus, \eqref{eq:regcase:argTo1-k} follows from \eqref{eq:regcase:flipsArg},  \eqref{eq:regcase:lambdatracksue}, \eqref{e.IDS}, and Theorem~\ref{t:osc}.

From \eqref{eq:regcase:argTo1-k}, one deduces immediately that
\begin{equation}
\lim_{t\to\infty} \frac{1}{\pi t} \Delta_{\Arg}^{[0,t]} \overline{\Lambda}(\tau^s x) = 1-k(E)
\end{equation}
for $\nu$-a.e.\ $x$.
In view of \eqref{eq:schwartzmanArgument}, this implies \eqref{eq:jacobiSchwartzgoal}.   
\medskip

\textbf{\boldmath Case 2: $p$ vanishes on a set of positive $\mu$-measure.} In this case, ergodicity implies that there exists $\Omega_\star^1 \subseteq \Omega$ of full $\mu$-measure such that $J_\omega$ is a direct sum of finite Jacobi matrices for all $\omega \in \Omega_\star^1$. There is also a full-measure set $\Omega_\star^2$ such that
\begin{equation} \label{eq:convergence}
\kappa_{\omega,N} \to \kappa \text{ weakly for all } \omega \in \Omega_\star^2. 
\end{equation}
Consider $\omega \in \Omega_\star:= \Omega_\star^1\cap \Omega_\star^2$ and choose $\cdots < n_{-1} < n_0 < n_1 < \cdots$ so that $a_\omega(n) = p(T^n \omega) = 0 $ if and only if $n= n_r$ for some $r$. Thus,
\begin{equation} \label{eq:splitting}
J_\omega = \bigoplus_r J_r,
\end{equation}
where $J_r = J_\omega|_{[n_r+1,n_{r+1}]}$ is a finite Jacobi matrix for each $r$. Indeed, writing $\ell_r = n_{r+1}-n_r$, $J_r$ is an $\ell_r \times \ell_r$ matrix. 

By assumption, $E \notin \Sigma$, which gives 
\begin{equation} \label{eq:EnotinsigmaJr}
E \notin \sigma(J_r)
\end{equation}
by Remark~\ref{rem:fullsupport}.

Without loss of generality, assume $n_0 = 0$. Notice that by \eqref{e.IDS}, \eqref{eq:convergence}, and \eqref{eq:splitting}, we have
\begin{equation} \label{eq:JohnsonSing1}
\lim_{N \to\infty} \frac{\sum_{r=0}^{N-1} \#[\sigma(J_r)\cap(E,\infty)]}{\sum_{r=0}^{N-1} \ell_r} 
= 1-k(E).
\end{equation}

For each block $J_r$, consider the Dirichlet solution $u_r$ defined on $[n_r,n_{r+1}+1]$ by $u_r(n_r) = 0$, $u_r(n_r+1)=1$, and 
\begin{equation}
\widetilde a(n-1)u_r(n-1) + b(n)u_r(n) + \widetilde  a(n)u_r(n+1) = E u_r(n), \quad n_r+1 \leq n \leq n_{r+1}
\end{equation}
where $\widetilde a(n) = a_\omega(n)$ for $n_r \le n < n_{r+1}$ and $\widetilde a(n_{r+1})=1$.\footnote{Note here that $u_r$ is a Dirichlet solution associated with a block for which $a(n_{r+1}) =1$, so $u_r$ is \emph{not} an eigenvector of the original block, $J_r$. Indeed, it cannot be such a solution because $E \notin \sigma(J_r)$ as noted in \eqref{eq:EnotinsigmaJr}.} Note that this is the point in the proof where Remark~\ref{r.oscillation} is relevant.

If 
\begin{equation}
f_r := \#\{n_r+1 \le j \le n_{r+1}: \sgn u_r(j) \neq \sgn u_r(j+1) \}
\end{equation} 
denotes the number of sign changes of $u_r$ on $[n_r+1,n_{r+1}+1] \cap \bbZ$, then Theorem~\ref{t:osc} implies
\begin{equation} \label{eq:JohnsonSing2}
f_r = \#[\sigma(J_r)\cap(E,\infty)].
\end{equation}

Notice that
\begin{equation} \label{eq:Euatsingularpoints}
\unstSect(T^{n_r + 1}\omega) = \mathrm{span}(\vec e_1)
\quad r \in \bbZ,
\end{equation}
which can be seen directly from the form of the cocycle, $a(n_r) = 0$, and $\unstSect\neq \{\vec 0 \}$. We claim that
\begin{equation}  \label{eq:singCaseMaingoal}
f_r = \frac{1}{\pi} \Delta_{\Arg}^{[n_r+1,n_{r+1}+1]} \Lambda(\omega,\cdot). \end{equation}
In view of \eqref{eq:JohnsonSing1}, \eqref{eq:JohnsonSing2}, and the definition of the Schwartzman homomorphism, \eqref{eq:singCaseMaingoal} implies \eqref{eq:jacobiSchwartzgoal}, so all that remains is to demonstrate \eqref{eq:singCaseMaingoal}.

First, notice that \eqref{eq:Euatsingularpoints} together with the explicit interpolated form of $B_t^E$ implies that the right-hand side of \eqref{eq:singCaseMaingoal} is always a nonnegative integer, that is, 
\begin{equation} \label{eq:singCase:argmultpi}
\left\lfloor \frac{1}{\pi} \Delta_{\Arg}^{[n_r+1,n_{r+1}+1]} \Lambda(\omega,\cdot) \right\rfloor =   \frac{1}{\pi} \Delta_{\Arg}^{[n_r+1,n_{r+1}+1]} \Lambda(\omega,\cdot).
\end{equation}
Next, observe from \eqref{eq:Euatsingularpoints} and induction that 
\begin{equation} \label{eq:case2:Euisdirichlet}
\unstSect(T^m\omega) = \mathrm{span}\{(u_r(m), u_r(m-1))^\top\}, \quad n_r+1 \leq m \leq n_{r+1}.
\end{equation} 
Finally, for $t \in [2/3,1]$ one has
\begin{align*}
B_t^E\left[B_{2/3}^E\right]^{-1} 
& = \begin{bmatrix} E - q(\omega) & -[(1-\eta(t))+\eta(t)p(T^{-1}\omega)] \\ (1-\eta(t))+\eta(t)p(\omega) & 0 \end{bmatrix}\begin{bmatrix} 0 & 1 \\ -1 & E - q(\omega)  \end{bmatrix} \\
& = \begin{bmatrix} (1-\eta(t))+\eta(t)p(T^{-1}\omega) & * \\ 0 & [1-\eta(t)]+\eta(t)p(\omega) \end{bmatrix},
\end{align*}
which (for $\eta(t) \in [0,1)$) preserves the half-planes $\bbH_\pm = \set{(x,y)^\top : \pm y > 0}$ and the semi-axes $\bbA_\pm = \set{(x,0)^\top : \pm x > 0}$. Consequently, by the same argument as in the %proof of \cite[Theorem~1.1]{DFGap},
previous case, one has
\begin{equation} \label{eq:singGoalstep}
\left\lfloor  \frac{1}{\pi} \Delta_{\Arg}^{[n_r+1,m+1]} \Lambda(\omega,\cdot) \right\rfloor = \#\{n_r+1 \le j \le m : \sgn u_r(j) \neq \sgn u_r(j+1) \}
\end{equation}
for any $n_r \leq m \leq n_{r+1}$ such that $u_r(m) \neq 0$. As discussed earlier, $E \notin \sigma(J_r)$, which implies $u(n_{r+1})\neq 0$. Thus, \eqref{eq:singCaseMaingoal} follows from \eqref{eq:singCase:argmultpi} and \eqref{eq:singGoalstep}. 
\end{proof}

\begin{remark}
(a) Let us remark that, although the proof in Case~1 follows the same lines as in the Schr\"odinger case since one passes to a full-measure set on which one can apply standard oscillation theory, the recent work \cite{AlkornZhang} is still a crucial new ingredient, since one needs the existence of the invariant sections for the case $\min p(\omega) = 0$.

(b) Let us comment also that we found the result in Case~2 rather serendipitous; prior to working out the proof, there were nontrivial reasons to be concerned. Let us describe this in more detail. On one hand, everything must be exact; one cannot give up any errors at any point, because, as soon as one gives up an $O(1)$ error on any block, the positive density of the set of zeros of the off-diagonal elements causes a nontrivial error in the thermodynamic limit defining the integrated density of states. On the other hand, in Case~1, one has to give up errors at two steps. First, one incurs an $O(1)$ error in approximating the (change in argument of the interpolation of) the Dirichlet solution with that of the  unstable section (cf. \eqref{eq:regcase:lambdatracksue}). Second, one incurs an error in approximating the number of sign flips of the Dirichlet solution by the number of half-rotations of the corresponding interpolated object (cf.\ \eqref{eq:regcase:flipsArg}).

Thus, there are two fortuitous circumstances that facilitate the proof in Case~2. First, the unstable section precisely (projectively) coincides with the Dirchlet solution on the blocks (eq.\ \eqref{eq:case2:Euisdirichlet}) and second, one has an \emph{exact} expression relating the change in argument of the interpolated object to the sign flips of the Dirichlet solution (eq.\ \eqref{eq:singCaseMaingoal}). Thus, all of the moving parts fit together surprisingly nicely.
\end{remark}

\section{Absence of Spectral Gaps for the Doubling Map}\label{sec.3}

In this section we give the proof of Theorem~\ref{t.doublingmap}. Recall that the theorem extends the main result from \cite{DFdm} from the case $p \equiv 1$ to $p \in C(\bbT, \bbC \setminus \{0\})$. We will follow the general strategy from \cite{DFdm} and explain the necessary changes. However, for the convenience of the reader, we briefly sketch all steps of the argument, even those that do not require any changes relative to \cite{DFdm}.

Suppose we are given sampling functions $p,q$ as in the statement of Theorem~\ref{t.doublingmap}. Sampling orbits of the doubling map $T$, this gives rise to the family of one-sided Jacobi matrices $\{ J_\omega \}_{\omega \in \bbT}$, and we are interested in the Lebesgue almost surely common essential spectrum $\Sigma_{p,q}$.  In order to show that it is connected, we will pass to an associated family $\{ \widetilde J_{\widetilde \omega} \}_{\widetilde \omega \in \widetilde \Omega}$ of two-sided Jacobi matrices, defined over an invertible extension $(\widetilde \Omega, \widetilde T)$ of the doubling map.

Let us begin by recalling the construction of the standard (Smale--Williams) solenoid; see \cite[Section~1.9]{BrinStuck2015Book} or \cite[Section~17.1]{KatokHassel1995Book} for additional background. Consider the solid torus $\frT$ given by
$$
\frT = \bbT \times \overline{\mathbb{D}}, \quad \text{where } \overline{\mathbb{D}} = \{ (x,y) \in \bbR^2 : x^2 + y^2 \le 1 \}.
$$
Choose $\lambda \in (0,1/2)$ and define the transformation $F : \mathfrak{T} \to \mathfrak{T}$ by
$$
F(\omega,x,y) = \left( 2 \omega, \lambda x + \frac12 \cos (2 \pi \omega), \lambda y + \frac12 \sin (2 \pi \omega) \right).
$$
Then the map $F$ is injective and the set
$$
S = \bigcap_{n = 0}^\infty F^n(\frT)
$$
is a compact $F$-invariant subset of $\frT$ on which $F$ is a homeomorphism; the set $S$ is called the (standard) \emph{solenoid}. Note that the projection to the first component, $\pi_1(\omega,x,y) = \omega$, sends $S$ onto $\bbT$ and the induced transformation on $\bbT$ is given by the doubling map, that is, $\pi_1 \circ F = T \circ \pi_1$. In other words, setting $\widetilde \Omega = S$, $\widetilde T = F|_S$ we obtain the desired invertible extension $(\widetilde \Omega, \widetilde T)$ of the doubling map. There is then a natural ergodic extension  of Lebesgue measure on $\bbT$ to the space $\widetilde \Omega$, which we denote by $\widetilde \mu$. In addition to being ergodic, $\widetilde \mu$ has full topological support, that is,
\begin{equation}\label{eq:tildemusupport}
\supp \widetilde \mu = \widetilde \Omega.
\end{equation}
The measure $\widetilde \mu$ can be viewed as both the  Sinai--Ruelle--Bowen measure and as the Bowen--Margulis measure. Locally, $\widetilde \mu$ is the product of the $(1/2,1/2)$-Bernoulli measure on the Cantor fibers and the Lebesgue measure on the circle. Consequently, $\widetilde \mu$ projects to Lebesgue measure under $\pi_1$:
\begin{equation}\label{e.bowenprojection}
(\pi_1)_*(\widetilde \mu) = \mu.
\end{equation}

Given the sampling functions $p$ and $q$ defined on $\bbT$, we consider sampling functions on the solenoid as follows:
\begin{equation}\label{e.tildefdep}
\widetilde p : \widetilde\Omega \to \bbR, \; (\omega,x,y) \mapsto p(\omega)
\end{equation}
and
\begin{equation}\label{e.tildefdeq}
\widetilde q : \widetilde\Omega \to \bbR, \; (\omega,x,y) \mapsto q(\omega).
\end{equation}
On the solenoid, the cocycle we may consider takes the form
$$
(\widetilde T, \widetilde B^{E}) : \widetilde \Omega \times \bbC^2 \to \widetilde \Omega \times \bbC^2, \quad (\widetilde \omega, \vec v) \mapsto (\widetilde T \widetilde \omega, \widetilde B^E (\widetilde \omega) \vec v),
$$
where 
\begin{equation}\label{e.solenoidcocycle}
\widetilde B^E : \widetilde \Omega \to \mathrm{GL}(2,\bbC), \quad \widetilde \omega \mapsto  \begin{bmatrix} E - \widetilde q(\widetilde T(\widetilde \omega)) & - \overline{\widetilde p(\widetilde \omega)} \\ \widetilde p(\widetilde T (\widetilde \omega)) & 0 \end{bmatrix}.
\end{equation}
Notice that relative to the standard definition, we have to shift the arguments here; see the comment after \eqref{e.dmcocycle} below for the reason.

We are now ready for the

\begin{proof}[Proof of Theorem~\ref{t.doublingmap}]
Let $p \in C(\bbT,\bbC^*)$ and $q \in C(\bbT,\bbR)$ be given, associate sampling functions on $\widetilde\Omega$ via \eqref{e.tildefdep} and \eqref{e.tildefdeq}, and consider the family 
\begin{equation}\label{e.tildeoper}\begin{split}
[\widetilde J_{(\omega,x,y)} \psi](n)
& = \overline{\widetilde p(  \widetilde T^{n-1}(\omega,x,y))}\psi(n-1) + \widetilde q(\widetilde T^n(\omega,x,y))\psi(n) \\ & \qquad\qquad\qquad  + \widetilde p(\widetilde T^n(\omega,x,y))\psi(n+1).
\end{split}
\end{equation}

We denote the associated density of states measure by $\widetilde \kappa_{\widetilde p,\widetilde q}$, the associated integrated density of states by $\widetilde k_{\widetilde p,\widetilde q}$, and the associated $\widetilde \mu$-almost sure spectrum of $\widetilde J_{\widetilde \omega}$ by $\widetilde \Sigma_{\widetilde p,\widetilde q}$. As before, we have
\begin{equation} \label{eq:sigmasuppdk}
\widetilde \Sigma_{\widetilde p,\widetilde q} = \supp \widetilde \kappa_{\widetilde p,\widetilde q}.
\end{equation}

It is not difficult to check that for $\widetilde \omega = (\omega,x,y) \in S$ and $n \in \bbZ_+$, we have 
\begin{equation} \label{e.potentialscoincide}
\widetilde p(\widetilde T^n \widetilde \omega)= p(T^n \omega) \quad \text{and} \quad \widetilde q(\widetilde T^n \widetilde \omega) = q(T^n \omega).
\end{equation}
This in turn implies that
\begin{equation} \label{eq:spectracoincide}
\widetilde \Sigma_{\widetilde p,\widetilde q} = \Sigma_{p,q}.
\end{equation}

Thus, the theorem will follow from \eqref{eq:sigmasuppdk} and \eqref{eq:spectracoincide}  once we show that for every 
$E \in \bbR \setminus \Sigma_{p,q} = \bbR \setminus \widetilde \Sigma_{\widetilde p,\widetilde q}$, we must have
\begin{equation}\label{e.goal:idsmustbeinteger}
\widetilde k_{\widetilde p,\widetilde q}(E) \in \bbZ,
\end{equation}
as this implies that either $E < \min \widetilde \Sigma_{\widetilde p,\widetilde q} = \min \Sigma_{p,q}$ or $E > \max \widetilde \Sigma_{\widetilde p,\widetilde q} = \max \Sigma_{p,q}$, so it follows that $\Sigma_{p,q}$ has no interior gaps.

Invoking Proposition~\ref{p.takingabsval} we can assume without loss of generality that $p$ is real-valued, as we have $\widetilde k_{\widetilde p,\widetilde q} = \widetilde k_{|\widetilde p|,\widetilde q}$ and $\widetilde \Sigma_{\widetilde p,\widetilde q} = \widetilde \Sigma_{|\widetilde p|,\widetilde q}$ (along with \eqref{e.tildefdep}). This will ensure that all stable and unstable sections below corresponding to the real energy $E$ will be real as well.

Since $E \in \bbR \setminus \Sigma_{p,q} = \bbR\setminus \widetilde \Sigma_{\widetilde p,\widetilde q}$, Theorem~\ref{t.jacobi_johnson} implies that  $(\widetilde T,\widetilde B^E)$ has a dominated splitting (recall $\widetilde B^E$ denotes the cocycles as in \eqref{e.solenoidcocycle}). As usual, denote the stable and unstable sections by $\stabSect,\unstSect:\widetilde\Omega \to \bbR\bbP^1$.

As we are now in the case of invertible matrices, Proposition~\ref{pr.stablesection} implies that the stable section is genuinely invariant, that is,
\begin{equation}
\widetilde B^E(\widetilde\omega)\stabSect(\widetilde\omega) = \stabSect(\widetilde{T}\widetilde{\omega}).
\end{equation}

Of course we have the related (non-invertible) cocycle $(T,B^E)$ defined by 
\begin{equation}\label{e.dmcocycle}
B^E : \Omega \to \mathrm{GL}(2,\bbR), \quad \omega \mapsto \begin{bmatrix} E - q(T\omega) & - p(\omega) \\ p(T\omega) & 0 \end{bmatrix}
\end{equation}
and $(T, B^E)^n = (T^n, B^E_n)$ for $n \in \bbZ_+$. Here it becomes clear why we had to shift in \eqref{e.solenoidcocycle} above, as $T^{-1}$ does not exist.

As a consequence of \eqref{e.potentialscoincide} we find that for $\widetilde \omega = (\omega,x,y) \in S$ and $n \in \bbZ_+$, we have
\begin{equation}\label{e.cocyclescoincide}
\widetilde B^E_n(\widetilde \omega) = B^E_n(\omega).
\end{equation}

Now let $X = X(\bbT,T)  = \bbT \times [0,1] / ((\omega,1) \sim (T \omega,0))$ be the suspension of the doubling map $(\bbT,T)$ and let $\widetilde X = X(\widetilde\Omega,\widetilde T)  = \widetilde \Omega \times [0,1] / ((\widetilde\omega,1) \sim (\widetilde T \widetilde\omega,0))$ be the suspension of the standard solenoid $(\widetilde\Omega,\widetilde T)$. Moreover, let $\widetilde \nu$ denote the suspension of $\widetilde\mu$. Notice that 
$$
\pi_X : \widetilde X \to X, \quad [(\omega,x,y),s] \mapsto [\omega,s]
$$ 
is continuous. Following the terminology of \cite{DFdm}, we say that $\widetilde\phi \in C(\widetilde{X},\bbT)$ \emph{factors through} $X$ if there is $\phi \in C(X,\bbT)$ such that $\widetilde\phi = \phi\circ \pi_X$, that is,
$\widetilde\phi([(\omega,x,y),s]) = \phi([\omega,s])$ for all $ (\omega,x,y) \in \widetilde\Omega$ and $s \in [0,1]$. The following was shown in \cite{DFdm}: 
\begin{equation} \label{e.solenoidThroughTCsharp}
\widetilde\phi \in C(\widetilde X,\bbT) \text{ factors through } X \Rightarrow \mathfrak{A}_{\widetilde \nu}([\widetilde\phi]) \in \bbZ.
\end{equation}

Define $\widetilde B^E_t$ for arbitrary $t \in \bbR$ by using a homotopy to the identity in a way similar to \eqref{e.interpolation}. Then use $\widetilde B^E_t$ to produce a continuous section $\widetilde \Lambda^+:\widetilde X \to \bbR\bbP^1$ by
\begin{equation} \label{eq:Lambda+interpolatedDef}
\widetilde\Lambda^+([\widetilde\omega,s]) = \widetilde B^E_s \widetilde\Lambda^+(\widetilde\omega), \quad \widetilde\omega \in \widetilde\Omega, \ s \in [0,1].
\end{equation}

The reader may then verify (see \cite{DFdm} for details) that the map $\widetilde\Lambda^+$ from  \eqref{eq:Lambda+interpolatedDef} is well-defined and continuous, and that it depends only on the first coordinate of $\widetilde \omega$, that is, there exists a continuous map $\Lambda^+:\bbT \to \bbR\bbP^1$ such that
$\widetilde\Lambda^+(\widetilde \omega) = \Lambda^+(\omega)$ for every $\widetilde \omega = (\omega,x,y) \in \widetilde\Omega$.

Thus, $\widetilde\Lambda^+$ factors through $X$, and hence $\mathfrak{A}_{\widetilde \nu}(\widetilde\Lambda^+) \in \bbZ$ by \eqref{e.solenoidThroughTCsharp}. Arguing as in the proof of Theorem~\ref{t.main}, $1-\widetilde k _{\widetilde p, \widetilde q}(E) = \mathfrak{A}_{\widetilde\nu}(\widetilde\Lambda^+)$, so $\mathfrak{A}_{\widetilde \nu}(\widetilde\Lambda^+) \in \bbZ$ yields \eqref{e.goal:idsmustbeinteger} and completes the proof.
\end{proof}

\begin{remark}\label{r.concludingrem}
Let us the address the obvious question whether Theorem~\ref{t.doublingmap} still holds if the assumption $p \in C(\bbT, \bbC \setminus \{ 0 \})$ is dropped. In short, this is unclear to us. Two statements are fundamentally true: 

(i) Our proof relies on the fact that the stable section depends only on the future, that is, on the forward iterates of a point $\tilde \omega = (\omega,x,y)$ under $\tilde T$, and in addition the sampling functions evaluated along the forward iterates then actually only depend on $\omega$ (i.e., they are independent of $x,y$). This in turn allows us to view the stable section as being defined on $\bbT$, which in turn leads to a ``trivial'' topology. 

(ii) As soon as $p$ is allowed to take the value zero, the stable section will no longer be defined as a map into $\bbR \bbP^1$, then interpreted as a map into $\bbT$, and hence it cannot be used to view the associated rotation number at the energy in question from the perspective of the Schwartzman homomorphism. 

Accepting (i) and (ii) one is naturally limited to working with the unstable section, but the latter clearly depends on the past, and hence on the components $x$ and $y$ of $\tilde \omega$ as well. Consequently, one cannot reduce to maps with trivial topology in order to evaluate the rotation number in a gap. In other words, it is unclear whether it necessarily takes integer values there, and hence the extension of Theorem~\ref{t.doublingmap} to general $p \in C(\bbT,\bbC)$ remains an interesting open problem.
\end{remark}

\bibliographystyle{abbrv}

\bibliography{gapbib}

\end{document}